\documentclass[reqno]{amsart}

\usepackage{amsmath}
\usepackage{amssymb}
\usepackage{amsfonts}
\usepackage{graphicx}
\usepackage{amsthm}
\usepackage{enumerate}
\usepackage{lscape}
\usepackage{dsfont}
\usepackage{color}
\usepackage{mathtools}

\usepackage{setspace}
\onehalfspacing

\newcommand{\R}{\mathds{R}}

\newcommand{\Ric}{\mathrm{Ric}}

\newcommand{\de}{\partial}          

\newcommand{\K}{K\"{a}hler}

\newcommand{\tr}{\operatorname{tr}}
\newcommand{\ov}[1]{\overline{#1}}

\newcommand{\deb}{\ov\partial}

\newcommand{\lmb}{\lambda}


\newcommand{\Hol}{{\operatorname{Hol}}}

\newcommand{\B}{\operatorname{Berg}}
\newcommand{\Sz}{Szeg\"{o}}

\def\diag{\mathrm{diag\;}}

\def\a{\alpha}
\def\b{\beta}

\def\bZ{{\mathbb Z}}
\def\bC{{\mathbb C}}

\def\b1{{\rm id}}

\newtheorem{theor}{Theorem}[section]
\newtheorem{prop}[theor]{Proposition}
\newtheorem{defin}[theor]{Definition}
\newtheorem{lem}[theor]{Lemma}

\newtheorem{ex}[theor]{Example}
\newtheorem{remark}[theor]{Remark}

\begin{document}

\title[The log-term  of  the disc bundle over a homogeneous Hodge manifold]{The log-term of the disc bundle over a homogeneous Hodge manifold}

\author{Andrea Loi}
\address{(Andrea Loi) Dipartimento di Matematica \\
         Universit\`a di Cagliari (Italy)}
         \email{loi@unica.it}

\author{Roberto Mossa}
\address{(Roberto Mossa) Dipartimento di Matematica \\
         Universit\`a di Cagliari (Italy)}
         \email{roberto.mossa@gmail.com}

\author{Fabio Zuddas}
\address{(Fabio Zuddas) Dipartimento di Matematica e Informatica \\
          Via delle Scienze 206 \\
         Udine (Italy)}
\email{fabio.zuddas@uniud.it}

\thanks{
The first two  authors were  supported  by Prin 2010/11 -- Variet\`a reali e complesse: geometria, topologia e analisi armonica -- Italy and also by INdAM-GNSAGA - Gruppo Nazionale per le Strutture Algebriche, Geometriche e le loro Applicazioni; the third author was supported by the FIRB project 2012 ``Geometria Differenziale e Teoria geometrica delle funzioni".   }
\subjclass[2000]{53D05;  53C55;  53D05; 53D45} 
\keywords{Bergman kernel; Ramadanov conjecture; homogeneous space; log-term.}

\begin{abstract}
We show the vanishing of  the log-term in the Fefferman expansion of the Bergman kernel of 
the  disk bundle over  a  compact simply-connected  homogeneous \K\--Einstein  manifold of classical type.
Our results extends that  in \cite{englisramadanov} for the case of  Hermitian symmetric spaces of compact type.
\end{abstract}
 
\maketitle

\tableofcontents

\section{Introduction}
We recall the basic framework.
Let $\left( L,\, h\right) $ be a positive Hermitian line bundle over a compact  \K\ manifold $\left( M,\, g\right) $ of complex dimension $n$, such that $\Ric \left( h\right) =\omega_g$, where $\omega_g$ denotes the \K\ form associated to $g$
and $\Ric \left( h\right) $  is the two--form on $M$ whose
local expression is given by
\begin{equation}\label{prodherm}
\Ric \left( h\right) =-\frac{i}{2}
\partial\bar\partial\log h\left( \sigma (x) ,\, \sigma (x) \right),
\end{equation}
for a trivializing holomorphic section $\sigma :U\rightarrow
L\setminus\{0\}$. In the quantum mechanics terminology the pair $(L, h)$ is called a {\em geometric quantization} of $\left( M,\, \omega_g\right) $ and $L$ the {\em quantum line bundle}.
Notice that such an $h$ exists if and only if $\frac{\omega_g}{\pi}$ is an integral \K\ form which represents the first Chern class
$c_1(L)$ of $L$. 
Consider the negative Hermitian line bundle $\left( L^*,\, h^* \right) $ over $\left( M,\, g\right) $ dual   to $\left( L,\, h\right) $ and
let $D\subset L^*$ be the unit disc bundle over $M$, i.e.
\begin{equation}\label{diskbundle}
D=\{v\in L^* \ |\  \rho \left( v\right) :=1-h^*\left( v,\, v\right) >0\}.
\end{equation}
It is not hard to see (and well-known)  that the condition $\Ric \left( h\right) =\omega_g$ implies that $D$ is a strongly pseudoconvex   domain in $L^*$ with smooth boundary
$X=\partial D=\{v\in L^*\ |\ \rho \left( v\right) =0\}.$
Consider the Bergman  space $\mathcal B _{D}$ consisting  of holomorphic $\left( n +1, 0 \right)$-forms $\eta$ on $D$ such that 
$\frac{i^{n+1}}{2^{n+1}}\int_D \eta \wedge \ov \eta < \infty$
and the corresponding Bergman kernel  ${\mathcal \B_D}$ namely the $(n+1, n+1)$-form
given by:
\[
{\mathcal \B_D}=\sum_{j}\eta_j\wedge\ov\eta_j,
\]
where $\{\eta_j\}$ is an orthonormal basis of $\mathcal B _{D}$.

We say that the log-term of the Bergman kernel ${\mathcal \B_D}$
vanishes if there exists a non-vanishing  $(n+1, n+1)$-form  $a$ on $\ov D$
(the closure of $D$)
such that 
\begin{equation}\label{logtermzero}
{\mathcal \B_D}(v)=a(v)\rho(v)^{-n-2}, \ v\in D.
\end{equation}

The main goal of this paper is to study the  Bergman kernel ${\mathcal \B_D}$
of the disk bundle $D\subset L^*$, when $L$ is the anticanonical bundle $K^*$  over a  compact homogeneous \K\--Einstein manifold
$(M, g)$ (and hence $L^*=K$ is the canonical bundle).

The following theorem represents our main result.

\begin{theor}\label{mainteor}
Let $(M, g)$ be a compact and  simply-connected homogeneous \K\--Einstein manifold  of classical type with 
$\frac{\omega_g}{\pi} \in c_1(L)$, 
$L=K^*$,  and  $D\subset L^*$
be the corresponding disk bundle.
Then the log-term of the Bergman kernel ${\mathcal \B_D}$ vanishes.
\end{theor}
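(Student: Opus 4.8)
The plan is to expand the Bergman kernel $\mathcal{B}_{D}$ along the fibres of the projection $\pi\colon D\to M$ and thereby reduce the vanishing of the log-term to an elementary summation governed by the Hilbert polynomial of $(M,L)$. Since $L=K^{*}$, the total space of $L^{*}=K$ has trivial canonical bundle: it carries a canonical nowhere-vanishing holomorphic $(n+1,0)$-form $\Theta$ (in a bundle chart with fibre coordinate $w$ one has $\Theta=dw\wedge dz_{1}\wedge\cdots\wedge dz_{n}$, and a direct cocycle check shows this is globally defined). Put $\nu:=\tfrac{i^{\,n+1}}{2^{\,n+1}}\,\Theta\wedge\ov\Theta$, a smooth, strictly positive, $S^{1}$-invariant $(n+1,n+1)$-form on all of $L^{*}$, hence on the compact set $\ov D$. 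Then every $\eta\in\mathcal{B}_{D}$ is $f\,\Theta$ for a unique $f\in\mathcal{O}(D)$, and $\mathcal{B}_{D}=\big(\sum_{j}|f_{j}|^{2}\big)\,\nu$ for any orthonormal basis $\{f_{j}\Theta\}$. Expanding $f=\sum_{m\ge 0}f_{m}w^{m}$ along the fibres (only $m\ge 0$ occurs, because the zero section lies in $D$), one checks that $f_{m}$ transforms as a holomorphic section of $K^{-m}=L^{m}$, so the weight-$m$ subspace of $\mathcal{B}_{D}$ is naturally identified with $H^{0}(M,L^{m})$. Now $\Ric(h)=\omega_{g}$ together with $\tfrac{\omega_{g}}{\pi}\in c_{1}(K^{*})$ forces $h$, up to an immaterial positive constant, to be the Hermitian metric on $K^{*}$ induced by $g$, so the fibre of $D$ over $x$ is $\{\,|w|^{2}<\det(g_{i\bar j})(x)\,\}$; a fibrewise integration then gives $\|f_{m}w^{m}\Theta\|^{2}_{\mathcal{B}_{D}}=\tfrac{\pi}{m+1}\,\|f_{m}\|^{2}_{L^{2}(L^{m})}$. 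Writing $\epsilon_{m}$ for the Kempf distortion function of the $L^{2}$-orthonormal $H^{0}(M,L^{m})$, this yields
\[
\mathcal{B}_{D}(v)=\frac{1}{\pi}\Big(\sum_{m\ge 0}(m+1)\,\epsilon_{m}\big(\pi(v)\big)\,h^{*}(v,v)^{m}\Big)\,\nu(v),\qquad v\in D.
\]

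Next I would use the homogeneity. By Matsushima's structure theorem a compact simply-connected homogeneous \K\ manifold is a flag manifold $G/P$, and the identity component $\mathcal{K}$ of its isometry group is a compact Lie group acting transitively on $M$ and lifting, isometrically, to $(L,h)=(K^{*},h)$. By the Borel--Weil theorem each $H^{0}(M,L^{m})$ is an irreducible $\mathcal{K}$-module and the $L^{2}$ inner product is $\mathcal{K}$-invariant, so Schur's lemma forces $\epsilon_{m}$ to be the constant $\epsilon_{m}=h^{0}(M,L^{m})/V$, where $V=\int_{M}\omega_{g}^{\,n}/n!$. Hence $\mathcal{B}_{D}(v)=(\pi V)^{-1}\,S\big(h^{*}(v,v)\big)\,\nu(v)$, with $S(t)=\sum_{m\ge 0}(m+1)\,h^{0}(M,L^{m})\,t^{m}$.

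Since $M$ is Fano, for $m\ge 0$ one has $L^{m}=K\otimes K^{-(m+1)}$ with $K^{-(m+1)}$ positive, so Kodaira vanishing and Riemann--Roch give $h^{0}(M,L^{m})=\chi(M,L^{m})=P(m)$, the Hilbert polynomial of $(M,L)$, of degree $n$ and with positive leading coefficient. Writing $(m+1)P(m)=\sum_{k=0}^{n+1}c_{k}\binom{m+k}{k}$, so that $c_{n+1}>0$, and using $\sum_{m\ge0}\binom{m+k}{k}t^{m}=(1-t)^{-k-1}$, one gets $S(t)=R(1-t)\,(1-t)^{-n-2}$ with $R$ a polynomial of degree $\le n+1$ and $R(0)=c_{n+1}>0$. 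Since $\rho(v)=1-h^{*}(v,v)$, this reads
\[
\mathcal{B}_{D}(v)=\underbrace{\ \tfrac{1}{\pi V}\,R\big(\rho(v)\big)\,\nu(v)\ }_{=:\,a(v)}\ \rho(v)^{-n-2},
\]
which is the expansion \eqref{logtermzero} once $a$ is shown to be nowhere zero on $\ov D$. On $\ov D$ we have $\rho\in[0,1]$ and $\nu$ is nowhere zero, so it suffices that $R(s)>0$ for $s\in[0,1]$: the constant term $\eta_{0}\wedge\ov\eta_{0}$ (coming from $H^{0}(M,\mathcal{O})$) is a fixed positive multiple of $\nu$, so $\mathcal{B}_{D}(v)$ is a strictly positive multiple of $\nu(v)$ for every $v\in D$, whence $R>0$ on $(0,1]$, while $R(0)=c_{n+1}>0$ (equivalently, the leading Fefferman coefficient of a smoothly bounded strongly pseudoconvex domain is positive). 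This would complete the argument.

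The step I expect to be the main obstacle is the fibrewise analysis of the first paragraph — constructing $\Theta$, matching the weight-$m$ part of $\mathcal{B}_{D}$ with $H^{0}(M,L^{m})$, and extracting the precise normalisation $\tfrac{\pi}{m+1}$ together with the identification of $h$ with the metric on $K^{*}$ induced by $g$; this is where all three hypotheses ($L=K^{*}$, $\Ric(h)=\omega_{g}$, and the integrality $\tfrac{\omega_{g}}{\pi}\in c_{1}(K^{*})$) genuinely enter. In a concrete implementation of the last two paragraphs one must in addition evaluate $h^{0}(M,L^{m})$ and verify the positivity of $R$ on $[0,1]$ explicitly; through Borel--Weil and the Weyl dimension formula this is transparent for flag manifolds of classical type, which should be the only place the classical-type hypothesis is needed.
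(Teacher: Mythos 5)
Your argument is correct, and from the fibre integration onwards it coincides with the paper's own computation (constancy of the Kempf distortion function, $h^0(L^m)$ a degree-$n$ polynomial, the binomial identity $\sum_{m\ge 0}\binom{m+k}{k}t^m=(1-t)^{-k-1}$, and the resulting factor $\rho^{-n-2}$ with a coefficient that is positive on $\ov D$); but the route to that reduction is genuinely different. The paper never works globally on the total space of $K$: it uses the Alekseevsky--Perelomov coordinates to get the dense chart $F_{reg}\cong\bC^n$, proves that the corresponding potential is Calabi's diastasis blowing up on $Y=M\setminus F_{reg}$ (Theorem \ref{diastasis} -- this is exactly where the classical-type hypothesis enters), identifies $D|_{F_{reg}}$ with a Hartogs domain $H\subset\bC^{n+1}$, and then needs the two extension results (Lemmas \ref{extensionsection} and \ref{extensionform}, across the divisor $Y$) to identify the form-Bergman space of $D$ with $\mathcal L^2(H)$ and its $S^1$-isotypic pieces with $H^0(L^m)$. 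You instead use the canonical nowhere-vanishing holomorphic $(n+1,0)$-form $\Theta$ on the total space of $K$ (the standard triviality of the canonical bundle of $\mathrm{Tot}(K_M)$, which is precisely where $L=K^*$ is used), so the identification of the weight-$m$ subspace with $H^0(L^m)$ and the normalisation $\tfrac{\pi}{m+1}$ come from a direct fibrewise integration, with no dense chart, no diastasis, and no extension lemmas; you also reprove the regularity statement (Lemma \ref{homreg}) via Borel--Weil and Schur instead of citing it, although transitivity plus $G$-invariance of the $L^2$ product already suffices without irreducibility. Your route buys two things: the coefficient $a=\tfrac{1}{\pi V}R(\rho)\,\nu$ is manifestly a globally defined smooth non-vanishing form on $\ov D$ (in the paper the factor $e^{\varphi+\ov\varphi}$ lives a priori only over $F_{reg}$, so the global statement is left slightly implicit), and nothing in your argument actually uses the classical-type hypothesis -- contrary to your closing remark, your positivity argument for $R$ on $[0,1]$ needs no Weyl dimension formula -- so your proof in fact covers all compact simply-connected homogeneous \K--Einstein manifolds, in line with what the authors conjecture after Theorem \ref{mainteor}; it is essentially the Engli\v{s}--Zhang fibre-integration scheme pushed beyond the symmetric case, whereas the paper's route additionally produces the explicit coordinate and diastasis description of Sections 2--3, which has independent interest but is not logically needed here. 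Two small points you should still record: $h=c\det(g_{i\ov j})$ with $c>0$ constant because the two metrics on $K^*$ have equal Ricci forms and a global pluriharmonic function on a compact manifold is constant, and the $G$-invariance of $h_m$ (needed for your Schur argument) holds because $g^*h_m/h_m$ defines a character of the compact semisimple group $G$, hence is trivial.
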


Problems and results of this kind go back to the celebrated Fefferman's theorem (\cite{fefferman}) about the expansion of the Bergman kernel for domains in $\bC^n$. Let us recall that, for a strongly pseudoconvex domain $D \subseteq \bC^n$ given by a defining function $\psi$, the Bergman kernel $B(z) = \sum |f_j(z)|^2$ of $D$ (where ${f_j}$ is an orthonormal basis for the Bergman space of holomorphic square integrable functions $f: D \rightarrow \bC$) admits the following decomposition
\begin{equation}\label{fefferman}
B(z) = \phi(z) \psi(z)^{-n-1} + \tilde \phi(z) \log \psi(z),
\end{equation}
where $\phi$ does not vanish on $\partial D$ and $\tilde \phi$ is said to be the {\em log-term}. 
 In  \cite{ramadanov}, Ramadanov conjectures that if the log-term of $D$ vanishes, then $D$ is biholomorphic to the unit ball in $\bC^n$. The conjecture has been proved in some special cases, among which domains in $\bC^2$ and domains with rotational symmetries (see, for example, \cite{hira}, \cite{naka}). A decomposition analogous to (\ref{fefferman}) has been proved by Boutet de Monvel and Sj\"{o}strand for the Szeg\"{o} kernel (\cite{BoutSjos}), and there is a corresponding version of the Ramadanov conjecture.
 It is then natural to consider the analogous definitions and questions for strictly pseudoconvex
domains in complex manifolds. Our Theorem \ref{mainteor} extends  that  of M. Engli\v{s} and G.  Zhang \cite{englisramadanov} when $M$ is an Hermitian symmetric space of compact type.  In  that paper they ask (see Question 4 at page 911) if the vanishing of the log-term of the Bergman kernel of the disk bundle
over a compact \K\ manifold $(M, g)$
should imply that  the manifold is  symmetric.
Theorem \ref{mainteor} provides a negative answer to this question:
one can  simply take  a compact and simply-connected homogeneous \K\--Einstein manifold  of classical type
which is not symmetric.\footnote{
It  is worth mentioning that in   \cite{ALZ}  (see also \cite{englisramadanov})  it is proven the analogous of Theorem \ref{mainteor}
for the \Sz\  kernel, namely  the  vanishing of  the  log-term of the  \Sz\  kernel of the disk bundle over  a  compact homogenous Hodge manifold (not necessarily of classical type).}

The proof of Theorem \ref{mainteor} is based on  the following two facts satisfied by any homogeneous \K\--Einstein manifold as in the theorem:
\begin{itemize}
\item
the quantization bundle $L=K^*\rightarrow M$ is regular, namely its associated Kempf distortion function 
$T_{mg}$ is a  positive constant for all positive integer $m$ (see Lemma \ref{homreg} below);
\item
$(M, g)$ admits a Calabi's diastasis function
whose maximal domain of definition is an open and dense subset of $M$ biholomorphically equivalent to $\bC^n$,
where $n$ is the complex dimension of $M$ (see Theorem \ref{diastasis}).
\end{itemize}

We think that the result  of Theorem \ref{mainteor}
can be extended also for the disk bundle $D_m\subset L^{*m}$ associated to the   \K\--Einstein
form  $\frac{m\omega_g}{\pi}\in c_1(L^m)$, for any $m\geq 1$. 
Actually  we believe that the result holds true for all homogeneous \K\ metrics  not necessarily Einstein  or of classical type.

The paper is organized as follows.
In Section \ref{flag}  we are going to give a self-contained exposition of the results  proved in \cite{A-P2} about  $G$-invariant K\"ahler metrics on homogeneous Hodge manifolds $G/K$ of classical type  and on the existence of an open dense subset  $F_{reg}$ of $G/K$ biholomorphically equivalent to the Euclidean space. 
In  Section \ref{complcoord}, after recalling the construction of an explicit \K\ potential in $F_{reg}$ due to \cite{A-P2},  we will prove 
Theorem \ref{diastasis}, namely that this potential coincides with Calabi's diastasis function having $F_{reg}$ as  its maximal domain of definition.
Finally, Section  \ref{secmain} is dedicated to the proof of Theorem \ref{mainteor}.
The paper ends with an  Appendix which contains  the basic material on semisimple Lie algebras used in this paper.

\section{K\"ahler structures and complex coordinates of flag manifolds} \label{flag}
 The material  of this section is based on   \cite{A-P} and \cite{A-P2}. 
Here and below, $G$ will denote a compact semisimple group with Lie algebra $\mathfrak{g}$ and $G^{\bC}$, $\mathfrak{g}^{\bC}$ the corresponding complexifications.
 Given $Z \in \mathfrak{g}$, let us consider the orbit $F =\mathrm{Ad}_G Z$ of $Z$ for the adjoint action of $G$ on $\mathfrak{g}$. Then $F$ is diffeomorphic to the quotient manifold $G/K$, being $K$ the stabilizer of $Z$ with respect to the adjoint action, and is called a {\it flag manifold}. 

Recall (see, for example, \cite{bes}) that
each compact homogeneous \K\ manifold $M$ is the \K\ product of a flat complex torus and a simply-connected compact homogeneous \K\ manifold, and admits a \K-Einstein structure if and only if is a torus or is simply-connected.
In the simply-connected case, $M$ is isomorphic, as a homogeneous complex manifold, to an orbit of the adjoint action of its connected group of isometries $G$ (which, being compact and with no center, is semisimple).
In this  paper we restrict to the simply-connected case.


We are going to recall how one can describe combinatorially all the $G$-invariant complex structures on a flag manifold $F$ via root systems and Dynkin diagrams. In what follows, $\mathfrak{k}$ and $\mathfrak{k}^{\bC}$ denote respectively the Lie algebra of $K$ and its complexification. 
As it is known from the theory of complex semisimple Lie algebras, given a Cartan subalgebra $\mathfrak{h}^{\bC}$ of $\mathfrak{g}^{\bC}$ (i.e. a maximal abelian subalgebra such that $ad_X$ is diagonalizable for each $X \in \mathfrak{h}^{\bC}$), one has the decomposition
\begin{equation}\label{decomposition}
\mathfrak{g}^{\bC} = \mathfrak{h}^{\bC} + \sum_{\alpha \in R} \bC E_{\alpha},
\end{equation}
 where a {\it root} $\alpha \in R$ is a functional $\mathfrak{h}^{\bC} \rightarrow \bC$ such that $[H, E_{\alpha}] = \alpha(H) E_{\alpha}$ for each $H \in \mathfrak{h}^{\bC}$ and for some $E_{\alpha} \in \mathfrak{g}^{\bC}$ (called {\it root vector} of $\alpha$). The set $R$ of roots is called the {\it root system} of $\mathfrak{g}^{\bC}$. Recall also that any two Cartan subalgebras $\mathfrak{h}^{\bC}_1$ and $\mathfrak{h}^{\bC}_2$ are conjugate, i.e. there exists $g \in G^{\bC}$ such that $Ad_g(\mathfrak{h}^{\bC}_1) = \mathfrak{h}^{\bC}_2$.
 Since by definition of semisimple algebra the Killing form $B$ of $\mathfrak{g}^{\bC}$ is nondegenerate, to every root $\alpha \in R$ is associated by duality a vector $H_{\alpha} \in \mathfrak{h}^{\bC}$ which satisfies $B(H, H_{\alpha}) = \alpha(H)$ for every $H \in \mathfrak{h}^{\bC}$. The real vector space $\mathfrak{h}$ spanned by the $H_{\alpha}$'s, $\alpha \in R$, is a real form of $\mathfrak{h}^{\bC}$ on which $B$ is real and positive definite (Theorem 4.4 in \cite{helgason}). We can then define a scalar product between the roots by $\langle \alpha, \beta \rangle := B(H_{\alpha}, H_{\beta})$.

 A {\it basis} $\Pi$ of the root system is a subset $\Pi \subseteq R$ such that every root $\alpha \in R$ can be written as a linear combination of the elements of $\Pi$ with the coefficients either all non-negative or all non-positive. In the first (resp. second) case, $\alpha$ is said to be positive (resp. negative). The set of positive roots with respect to a fixed basis will be denoted by $R^+$. It can be shown that any subset $R^+ \subseteq R$ such that $R = R^+ \cup (-R^+)$, $R^+ \cap (-R^+) = \emptyset$ and $\alpha + \beta \in R^+$ provided $\alpha, \beta \in R^+$, $\alpha + \beta \in R$, is the set of the positive roots with respect to some basis.

The Lie algebra $\mathfrak{g}^{\bC}$ can be combinatorially represented by a {\it Dynkin diagram}, which is constructed as follows: fixed a basis $\Pi$ of $R$, the Dynkin diagram is the linear graph having $\sharp \Pi$ vertices, one for each element of $\Pi$, and such that the number of edges between two vertices depends on the value of the scalar product between the corresponding roots (more precisely, if the angle between the roots is $\theta$, the number of edges is equal to $4 \cos^2 \theta$, and one proves that this number can be only 0,1,2,3).
Moreover, an edge between two vertices is oriented with an arrow if and only if the corresponding roots have different norms (and the arrow goes from the longer root to the shorter one).
In fact, the diagram does not depend on the choice of the basis. The choice of a basis is called an {\it equipment} for the diagram. In the Appendix, we give for each of the classical groups $G = SU(d), Sp(d), SO(2d), SO(2d+1)$ an equipment which will have a fundamental role in the proofs of our results (see Remark \ref{remarkalternativeQ}).

Now, the complexification $\mathfrak{k}^{\bC}$ of the Lie algebra of the stabilizer subgroup $K$ turns out to be reductive (i.e. it decomposes as the direct sum of its center and a semisimple part) so we have

\begin{equation}\label{decompositionK}
\mathfrak{k}^{\bC} = Z(\mathfrak{k}^{\bC}) \oplus (\mathfrak{h}^{\bC})' + \sum_{\alpha \in R_K} \bC E_{\alpha}
\end{equation}
where $Z(\mathfrak{k}^{\bC})$ is the center of $\mathfrak{k}^{\bC}$ and $(\mathfrak{h}^{\bC})'$, $R_K \subseteq R$ are respectively a Cartan subalgebra and the root system of the semisimple part of $\mathfrak{k}^{\bC}$ (with respect to $(\mathfrak{h}^{\bC})'$).

The elements of $R_K$ (resp. of the complementary subset $R_M$) are usually called {\it white roots} (resp. {\it black roots}). The reason is that one can represent the flag manifold $G/K$ on the Dynkin diagram of $G$, equipped with a given basis $\Pi$, by painting black the vertices corresponding to roots belonging to $R_M$. One gets a decomposition $\Pi = \Pi_K \cup \Pi_M$ of the basis $\Pi$ and the resulting diagram is called {\it painted Dynkin diagram}.\footnote{Some authors (for example \cite{BFR}) reverse the notation and paint black the roots in $R_K$.}. Looking at a painted diagram, one can easily recover the root decomposition and the flag manifold: indeed, a root $\alpha$ belongs to $R_M$ if and only if $\alpha = \sum_{\beta \in \Pi} c_{\beta} \beta$ with $c_{\beta} \neq 0$ for some $\beta \in \Pi_M$; moreover, deleting the black nodes from the diagram one gets the Dynkin diagram of the semisimple part of $K$.

\begin{defin}\label{defQ}
A subset $Q \subseteq R_M$ is said to be maximal closed nonsymmetric if it satisfies the following conditions:

\begin{enumerate}

\item[(i)] $Q \cup (-Q) = R_M$;

\item[(ii)] $Q \cap (-Q) = \emptyset$;

\item[(iii)] for any $\alpha, \beta \in Q$ such that $\alpha + \beta \in R$ one has $\alpha + \beta \in Q$.

\end{enumerate}

\end{defin}

Then we have

\begin{prop}\label{Qcomplex}(Corollary 3.1 in \cite{A-P2})
There exists a one-to-one correspondence between $G$-invariant complex structures on $G/K$ and maximal closed nonsymmetric subsets $Q$ in $R_M$. 
 More precisely, given the decomposition $\mathfrak{g}^{\bC} = \mathfrak{k}^{\bC} + \mathfrak{m}^{\bC}$, $\mathfrak{m}^{\bC} = \sum_{\alpha \in R_M} \bC E_{\alpha}$, the $G$-invariant complex structure $J_Q$ associated to $Q$ is determined (on the complexified tangent space at the base point $o = K$, naturally identified with $\mathfrak{m}^{\bC}$) by $(J_Q)_o(E_{\pm \alpha}) = \pm i E_{\alpha}$ ($\alpha \in Q$).
\end{prop}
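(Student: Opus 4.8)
The plan is to set up the correspondence at the base point $o=K$ and then transport it by $G$-invariance. First I would use the standard fact that a $G$-invariant almost complex structure $J$ on $G/K$ is the same datum as an $\mathrm{Ad}(K)$-invariant endomorphism $J_o$ of the real tangent space $T_o(G/K)\cong\mathfrak{m}$ with $J_o^2=-\Id$, where $\mathfrak{g}=\mathfrak{k}\oplus\mathfrak{m}$ is the reductive decomposition and $\mathfrak{m}^{\bC}=\sum_{\alpha\in R_M}\bC E_\alpha$. Complexifying $J_o$, the key observation is that, since $\mathfrak{h}^{\bC}\subseteq\mathfrak{k}^{\bC}$ (the Cartan subalgebra centralises the element $Z$ whose stabiliser is $K$), the $\mathrm{Ad}(K)$-invariance forces $J_o$ to commute with $\mathrm{ad}(H)$ for every $H\in\mathfrak{h}^{\bC}$. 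As the operators $\mathrm{ad}(H)|_{\mathfrak{m}^{\bC}}$ are simultaneously diagonalised with one-dimensional joint eigenspaces $\bC E_\alpha$ (the roots in $R_M$ being pairwise distinct functionals on $\mathfrak{h}^{\bC}$), $J_o$ must preserve each line $\bC E_\alpha$ and hence act on it by a scalar, which by $J_o^2=-\Id$ equals $\pm i$. I would then set $Q:=\{\alpha\in R_M : J_o E_\alpha=+iE_\alpha\}$, so that the $(1,0)$-space is $\mathfrak{m}^+=\sum_{\alpha\in Q}\bC E_\alpha$; conversely any $Q\subseteq R_M$ determines $\mathfrak{m}^\pm$ and a complex-linear $J_o$, which is precisely the recipe of the statement.

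Next I would check conditions (i) and (ii). Writing $\tau$ for the conjugation of $\mathfrak{g}^{\bC}$ with respect to the compact real form $\mathfrak{g}$, one has $\tau(\bC E_\alpha)=\bC E_{-\alpha}$, and the reality of $J_o$ (i.e. $J_o\tau=\tau J_o$ on $\mathfrak{m}$) is equivalent to $\tau(\mathfrak{m}^+)=\mathfrak{m}^-$. Since $\tau\bigl(\sum_{\alpha\in Q}\bC E_\alpha\bigr)=\sum_{\alpha\in Q}\bC E_{-\alpha}$, this says exactly $\mathfrak{m}^-=\sum_{\beta\in-Q}\bC E_\beta$, whence $R_M\setminus Q=-Q$; equivalently $Q\cup(-Q)=R_M$ and $Q\cap(-Q)=\emptyset$, which are (i) and (ii). Thus reality of a $G$-invariant almost complex structure is equivalent to (i)$+$(ii) for the associated $Q$.

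The heart of the proof is the integrability step. I would invoke the classical homogeneous Newlander--Nirenberg criterion: $J$ is integrable if and only if $\mathfrak{q}:=\mathfrak{k}^{\bC}\oplus\mathfrak{m}^+$ is a complex subalgebra of $\mathfrak{g}^{\bC}$. Since $\mathfrak{k}^{\bC}$ is a subalgebra and $[\mathfrak{k}^{\bC},\mathfrak{m}^{\bC}]\subseteq\mathfrak{m}^{\bC}$, this amounts to two requirements: $[\mathfrak{k}^{\bC},\mathfrak{m}^+]\subseteq\mathfrak{m}^+$ ($R_K$-invariance of $Q$) and $[\mathfrak{m}^+,\mathfrak{m}^+]\subseteq\mathfrak{q}$. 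Using $[E_\alpha,E_\beta]\in\bC E_{\alpha+\beta}$ (nonzero only when $\alpha+\beta\in R$), I would show that, granted (i) and (ii), the single closedness hypothesis (iii) is equivalent to both requirements. The forward implication is immediate: if $\alpha,\beta\in Q$ with $\alpha+\beta\in R_M$, then $E_{\alpha+\beta}\in\mathfrak{q}\cap\mathfrak{m}^{\bC}=\mathfrak{m}^+$, so $\alpha+\beta\in Q$. The delicate point, and the main obstacle, is to rule out the ``white'' case $\alpha+\beta\in R_K$ and to recover $R_K$-invariance purely from (iii). For the latter: if $\alpha\in Q$, $\gamma\in R_K$, $\alpha+\gamma\in R$ but $\alpha+\gamma\notin Q$, then $-\alpha-\gamma\in Q$ by (i), and $\alpha+(-\alpha-\gamma)=-\gamma\in R$ forces $-\gamma\in Q$ by (iii), which is absurd since $-\gamma\in R_K$ and $Q\subseteq R_M$. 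The same device excludes the white case: if $\alpha,\beta\in Q$ with $\alpha+\beta=\gamma\in R_K$, then $\alpha+(-\gamma)=-\beta\in R$ and $R_K$-invariance (now available) would give $-\beta\in Q$, contradicting $\beta\in Q$ by (ii). Hence (i)$+$(ii)$+$(iii) make $\mathfrak{q}$ a subalgebra, and conversely if $\mathfrak{q}$ is a subalgebra then both requirements hold and the argument above yields (iii); so integrability of $J_Q$ is equivalent to (iii).

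Finally I would assemble the two directions into a bijection. From a $G$-invariant complex structure we produced $Q$ satisfying (i)--(iii); from a $Q$ satisfying (i)--(iii), the endomorphism $J_o$ acting by $\pm i$ on $\mathfrak{m}^\pm$ is real by (i)$+$(ii), is $\mathrm{Ad}(K)$-invariant because the $R_K$-invariance of $Q$ makes $\mathfrak{m}^\pm$ stable under $\mathrm{ad}(\mathfrak{k}^{\bC})$ (and $K$ is connected, being the centraliser of a torus element in a connected compact group), and extends to an integrable $G$-invariant complex structure $J_Q$ by the integrability step. Since $Q$ determines $\mathfrak{m}^+$ and hence $J_o$, while conversely $J_o$ recovers $Q$ as its $+i$-eigenset among the $E_\alpha$, the two assignments are mutually inverse, giving the claimed one-to-one correspondence.
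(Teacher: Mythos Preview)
The paper does not supply a proof of this proposition at all; it is stated with a bare citation to Corollary~3.1 of \cite{A-P2}, so there is nothing in the paper to compare your argument against.

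That said, your argument is the standard one and is essentially correct. Reducing to an $\mathrm{Ad}(K)$-invariant $J_o$ on $\mathfrak{m}$, using $\mathfrak{h}^{\bC}\subseteq\mathfrak{k}^{\bC}$ to force $J_o$ to be diagonal on the root lines, reading reality as (i)$+$(ii), and reading integrability via the parabolic--subalgebra criterion as the closedness condition is exactly how the result is proved in the source you are reconstructing. Two small remarks. First, you fold the $R_K$-invariance condition $[\mathfrak{k}^{\bC},\mathfrak{m}^+]\subseteq\mathfrak{m}^+$ into the ``integrability'' step, whereas conceptually it is what makes the $G$-invariant tensor $J_Q$ well-defined on $G/K$ in the first place; since you do derive it from (iii) and invoke connectedness of $K$ in the final assembly, the bijection is unaffected. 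Second, in your derivation of $R_K$-invariance from (iii) you implicitly use that $\alpha\in R_M$, $\gamma\in R_K$, $\alpha+\gamma\in R$ forces $\alpha+\gamma\in R_M$ (so that (i) applies to $\alpha+\gamma$); this is immediate from the paper's description of $R_M$ as the set of roots having a nonzero coefficient on some node of $\Pi_M$, but is worth making explicit.
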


The following remark is crucial for the results of the next section.

\begin{remark}\label{remarkalternativeQ}
\rm Let $F = G/K$ be a flag manifold endowed with an invariant complex structure. We claim that, fixed any basis $\Pi$ for the root system $R$ of $G$, one can find a painting of the Dynkin diagram of $G$ equipped with $\Pi$ so that the associated flag manifold, endowed with the complex structure determined by the maximal closed nonsymmetric subset $R_M^+$ of the black roots which are positive with respect to $\Pi$ is $G$-diffeomorphic to $F$.

 In order to see that, fix a basis $\Pi_K$ for the set $R_K$ of white roots of $F$, and let $R_K^+$ be the induced set of white positive roots. If $Q$ is the maximal closed nonsymmetric subset of $R_M$ corresponding to the complex structure of $F$, take $R^+ := Q \cup R_K^+$ as the set of positive roots in $R$ (in fact, it is easily seen to satisfy the three properties needed to be the set of positive roots induced by a basis) and let $\Pi'$ be the basis of $R$ which induces $R^+$. Clearly, $\Pi'$ contains $\Pi_K$. Now, the Dynkin diagram of $G$ equipped with $\Pi'$ and with nodes in $\Pi' \setminus \Pi_K$ painted black represents $F$ and $Q$ is the set of positive black roots with respect to this equipment.
 Then, it is enough to apply to this painted diagram a Weyl group transformation $w$ (see for example Section II.6 in \cite{knapp}) such that $w(\Pi') = \Pi$ in order to get a repainted diagram equipped with $\Pi$ and such that $Q$ is sent to the set of black roots which are positive with respect to $\Pi$, as claimed.
By this remark, in the results we are going to prove about the flag manifolds $G/K$ of the classical groups $G = SU(d), SO(2d), SO(2d+1), Sp(d)$, it is not restrictive to assume that $G/K$ is represented by a painted Dynkin diagram equipped with the canonical equipment $\Pi_{can}$ (see the Appendix)  and with the invariant complex structure of $G/K$ given by the set $R_M^+$ of black positive roots (with respect to $\Pi_{can}$).

\end{remark}

Let now $\Pi$ be a basis of $R$ and let $\Pi = \Pi_K \cup \Pi_M$ be the decomposition into white and black nodes, with $\Pi_K = \{ \beta_1, \dots, \beta_k\}$, $\Pi_M = \{ \a_1, \dots, \a_m\}$. As we have recalled at the beginning of this section, the Killing form induces by duality a scalar product $\langle, \rangle$ on the real space ${\mathfrak{h}}^*$ spanned by the roots. The {\it fundamental weight} $\bar \a_i$
associated with $\a_i$, $i = 1, \dots, m$ is the element of ${\mathfrak{h}}^*$ defined by

\begin{equation}\label{deffundwe}
\frac{2 \langle \bar \alpha_i, \alpha_j \rangle}{\| \alpha_j \|^2} = \delta_{ij}, \ \ \langle \bar \alpha_i, \beta_j \rangle = 0
\end{equation}

If we denote by $\mathfrak{t} = Z(\mathfrak{k}) \cap \mathfrak{h}$ the intersection between the center $Z(\mathfrak{k})$ of $\mathfrak{k}$ and $\mathfrak{h}$, then the fundamental weights form a basis of the real space $\mathfrak{t}^*$.

\begin{prop}\label{transgression}(\cite{A-P2}, Proposition 2.2) There exists a natural isomorphism  between $\mathfrak{t}^*$ and the space of closed invariant 2-forms on $F$ given by

\begin{equation}\label{omegaxi}
\xi \in \mathfrak{t}^* \rightarrow \omega_{\xi} = \frac{i}{2 \pi} \sum_{\alpha \in R_M} \frac{2 \langle \xi, \alpha \rangle}{\langle \alpha, \alpha \rangle} \omega^{\alpha} \wedge \omega^{-\alpha}
\end{equation}
 where the $\omega^{\alpha}$'s are 1-forms on  $\mathfrak{m}^{\bC}$, dual to the $E_{\alpha}$'s.

If $J_Q$ is the complex structure associated to $Q = R_M^+$ with the given equipment $\Pi$, then $\omega_{\xi}$ is K\"ahler with respect to $J_Q$ if and only if all the coordinates of $\xi$ with respect to $\{ \bar \alpha_1, \dots, \bar \alpha_m \}$ are positive. Moreover, $\omega_{\xi}$ is integral if and only if the coordinates of $\xi$ with respect to $\{ \bar \alpha_1, \dots,  \bar \alpha_m\}$ are integers.
\end{prop}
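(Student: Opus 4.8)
\emph{Approach.} I would prove the three assertions in turn: that $\omega_\xi$ is a closed $G$-invariant $2$-form and that $\xi\mapsto\omega_\xi$ is a bijection onto the closed invariant $2$-forms; then the K\"ahler criterion; then the integrality criterion. A $G$-invariant $2$-form on $F=G/K$ is determined by its value at $o=eK$, an $\mathrm{Ad}(K)$-invariant element of $\Lambda^2\mathfrak{m}^*$ with $\mathfrak{m}^{\bC}=\sum_{\alpha\in R_M}\bC E_\alpha$; since $\mathfrak{h}^{\bC}\subseteq\mathfrak{k}^{\bC}$ (the isotropy has full rank), the torus $T=\exp\mathfrak{t}$ acts on $\bC E_\alpha$ by the character $\alpha|_{\mathfrak{t}}$, and averaging over $T$ and then over the semisimple part of $K$ shows (cf.\ \cite{bes}) that invariant $2$-forms are combinations $\sum_{\alpha\in R_M^+}c_\alpha\,\omega^\alpha\wedge\omega^{-\alpha}$ (the $c_\alpha$ being coupled along the $K$-irreducible pieces of $\mathfrak{m}^{\bC}$); formula \eqref{omegaxi}, after pairing the $\pm\alpha$ terms, is such a combination with $c_\alpha=\tfrac{2\langle\xi,\alpha\rangle}{\langle\alpha,\alpha\rangle}$, hence $G$-invariant. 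For closedness I would let $H_\xi\in\mathfrak{t}\subseteq\mathfrak{h}$ be the $B$-dual of $\xi$ and observe that, with the coroot normalisation $[E_\alpha,E_{-\alpha}]=\tfrac{2}{\langle\alpha,\alpha\rangle}H_\alpha$, $\omega_\xi$ equals the $2$-form $\widehat{\omega}$ with $\widehat{\omega}_o(A,B)=\tfrac{1}{2\pi}B(H_\xi,[A,B])$ for $A,B\in\mathfrak{m}$, i.e.\ the Kirillov--Kostant--Souriau form of the coadjoint orbit through $H_\xi$ (which is $G$-equivariantly $F$); then either KKS closedness applies, or one computes directly $d\omega_\xi(X,Y,Z)=-\sum_{\mathrm{cyc}}\omega_\xi([X,Y]_{\mathfrak{m}},Z)$, drops the $\mathfrak{k}$-part of the brackets using $H_\xi\in\mathfrak{h}\subseteq\mathfrak{k}$ and $B(\mathfrak{k},\mathfrak{m})=0$, and gets $0$ from the Jacobi identity.

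\emph{Bijectivity.} The map $\xi\mapsto\omega_\xi$ is injective, since the coefficient of $\omega^{\alpha_j}\wedge\omega^{-\alpha_j}$ determines $\langle\xi,\alpha_j\rangle$ for each black simple root $\alpha_j$ and the $\bar{\alpha}_i$ form a basis of $\mathfrak{t}^*$. It is onto the closed invariant $2$-forms by a dimension count: $\mathfrak{m}$ has no nonzero $\mathrm{Ad}(K)$-fixed vector (as $\mathfrak{h}$ acts on $\mathfrak{m}^{\bC}$ with nonzero weights), so there is no nonzero invariant $1$-form on $F$ and hence no nonzero exact invariant $2$-form; averaging a closed representative over the connected group $G$ shows every class in $H^2(F;\bR)$ has an invariant representative; thus the closed invariant $2$-forms map isomorphically onto $H^2(F;\bR)$, whose dimension $b_2(F)$ equals $m=|\Pi_M|=\dim\mathfrak{t}^*$, so the injection from $\mathfrak{t}^*$ must be onto.

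\emph{K\"ahler and integrality criteria.} For $J_Q$ with $Q=R_M^+$ the $(1,0)$-space at $o$ is spanned by $\{E_\alpha:\alpha\in R_M^+\}$, so each $\omega^\alpha\wedge\omega^{-\alpha}$ ($\alpha\in R_M^+$) has type $(1,1)$; being closed, $\omega_\xi$ is K\"ahler for $J_Q$ precisely when it is a positive $(1,1)$-form, i.e.\ when $\tfrac{2\langle\xi,\alpha\rangle}{\langle\alpha,\alpha\rangle}>0$ for all $\alpha\in R_M^+$. Writing $\xi=\sum_{i=1}^m x_i\bar{\alpha}_i$ and using \eqref{deffundwe} together with $\langle\xi,\beta_j\rangle=0$, I get $\tfrac{2\langle\xi,\alpha_j\rangle}{\langle\alpha_j,\alpha_j\rangle}=x_j$ for black simple $\alpha_j$, and for a general $\alpha=\sum_{\gamma\in\Pi}n_\gamma\gamma\in R_M^+$ (so $n_\gamma\ge0$, with $n_{\alpha_j}>0$ for some black $\alpha_j$) $\tfrac{2\langle\xi,\alpha\rangle}{\langle\alpha,\alpha\rangle}=\tfrac{1}{\langle\alpha,\alpha\rangle}\sum_j n_{\alpha_j}\langle\alpha_j,\alpha_j\rangle\,x_j$, a nonnegative combination of the $x_j$ with at least one strictly positive weight; hence all these quantities are positive iff all $x_j>0$ (the forward implication by taking $\alpha=\alpha_j$). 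For integrality, $H_2(F;\bZ)$ is freely generated by the Schubert curves $S_j\cong\CP^1$ dual to the black simple roots, and a standard computation of the symplectic area gives $\int_{S_j}\omega_\xi=\tfrac{2\langle\xi,\alpha_j\rangle}{\langle\alpha_j,\alpha_j\rangle}=x_j$; since $H^2(F;\bZ)$ is torsion-free and pairs perfectly with $H_2(F;\bZ)$, $[\omega_\xi]$ is integral iff $x_j\in\bZ$ for every $j$. The main obstacle is the first point — identifying the invariant $2$-forms and pinning down $b_2(F)=m$; once these classical facts are in hand, closedness, bijectivity, and the two criteria are short and essentially formal.
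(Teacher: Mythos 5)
The paper does not prove this proposition at all: it is quoted, statement and normalisation included, from \cite{A-P2} (Proposition 2.2), so there is no internal proof to measure your argument against. On its own terms your sketch is essentially correct and follows the standard route (the same circle of ideas as the cited source): the decisive step is your identification of $\omega_\xi$ with the Kirillov--Kostant--Souriau form $\widehat{\omega}_o(A,B)=\tfrac{1}{2\pi}B(H_\xi,[A,B])$ at the $B$-dual $H_\xi\in Z(\mathfrak{k})\cap\mathfrak{h}$; since $H_\xi$ is $\mathrm{Ad}(K)$-fixed and $B(\mathfrak{k},\mathfrak{m})=0$, this gives $G$-invariance and (via the Chevalley--Eilenberg formula plus Jacobi) closedness in one stroke, and it actually makes your preliminary averaging discussion of which combinations $\sum c_\alpha\,\omega^\alpha\wedge\omega^{-\alpha}$ are invariant unnecessary. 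The K\"ahler criterion (diagonal positive $(1,1)$-form, with $\tfrac{2\langle\xi,\alpha\rangle}{\langle\alpha,\alpha\rangle}$ a nonnegative combination of the $x_j$ for $\alpha\in R_M^+$, at least one coefficient positive) is right. Two points are invoked rather than proved and should be flagged as citations: surjectivity onto the closed invariant $2$-forms uses $b_2(F)=\sharp\Pi_M$ together with the averaging argument, and the integrality criterion uses that $H_2(F;\bZ)$ is free on the Schubert curves $S_j$ with $\int_{S_j}\omega_\xi=x_j$; both are classical for flag manifolds but are genuine external inputs. Finally, the identity $\omega_\xi=\widehat\omega$ requires the normalisation $[E_\alpha,E_{-\alpha}]=\tfrac{2}{\langle\alpha,\alpha\rangle}H_\alpha$, which you state explicitly and which is in fact satisfied by the explicit root vectors and Killing forms listed in the paper's Appendix (e.g.\ for $su(d)$, $[E_{ij},E_{ji}]=E_{ii}-E_{jj}=\tfrac{2}{\langle\alpha,\alpha\rangle}H_\alpha$ with $B=2d\,\tr$), so this is a convention check, not a gap.
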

 By Proposition \ref{Qcomplex}, invariant complex structures on $G/K$ are in one-to-one correspondence with maximal closed nonsymmetric subsets $Q \subseteq R_M$. Indeed, the manifold $G/K$ endowed with the complex structure $J_Q$ is biholomorphic to the complex homogeneous manifold $G^{\bC}/K^{\bC} G^{Q}$, where $G^{Q} = \exp(\mathfrak{g}^{Q})$ and $\mathfrak{g}^{Q} = \sum_{\alpha \in Q} \bC E_{\alpha}$ (see \cite{A-P}, \cite{A-P2} and also \cite{arv} for more details). Using this, in \cite{A-P2} it is given an explicit system of complex coordinates on an open subset of $G/K$ as follows.

Since the product $G_{reg}^{\bC} = G^{-Q} K^{\bC} G^{Q}$ (where $G^{-Q} = \exp(\mathfrak{g}^{-Q})$ and $\mathfrak{g}^{-Q} = \sum_{\alpha \in -Q} \bC E_{\alpha}$) defines an open dense subset in $G^{\bC}$, its image in $G^{\bC}/K^{\bC} G^{Q}$ via the natural projection $G^{\bC} \rightarrow G^{\bC}/K^{\bC} G^{Q}$ defines an open dense subset in $G/K$, denoted $F_{reg} = G_{reg}^{\bC}/K^{\bC} G^Q$. Clearly, $F_{reg} \simeq G^{-Q}$.
 Then, by

\begin{equation}\label{complexcoordinates}
z = (z_{\alpha})_{\alpha \in -Q} \in \bC^n \mapsto \exp(\sum_{\alpha \in -Q} z_{\alpha} E_{\alpha}) \in G^{-Q} \simeq F_{reg} \subseteq F
\end{equation}

(where $n$ is the cardinality of $Q$) one defines a system of complex coordinates on $F_{reg}$. 

\section{Calabi's diastasis function of classical flag manifolds} \label{complcoord}

We now  assume that $G$  is one of the classical groups $SU(d), Sp(d), SO(2d)$, $SO(2d+1)$ (where the orthogonal groups are realized as groups of matrices as in the Appendix).  In this section, after recalling the construction of an explicit    K\"ahler potential for the invariant K\"ahler form $\omega_{\xi}$ (in the notation of Proposition \ref{transgression}) in the  coordinates defined at the end of the previous section
we prove, in Theorem \ref{diastasis}, that this potential is indeed Calabi's diastasis function.

\begin{defin}\label{admissible}(\cite{A-P2}, Definition 8.1)
Let $F = G/K$, $G \subseteq GL(N, \bC)$, be a flag manifold. A principal minor $\Delta_k$, $k=1, \dots, N-1$, (i.e. the function $GL(N, \bC) \rightarrow \bC$ associating to $A \in GL(N, \bC)$ the determinant $\Delta_k(A)$ of the submatrix of $A$ given by the first $k$ rows and columns of $A$) is said to be $F$-{\it admissible} if for every $A \in K^{\bC}$ and every $v = (v_1, \dots, v_N) \in \bC^N$, $v_{k+1} = \cdots = v_N=0$ implies $(vA)_{k+1} = \cdots = (vA)_N = 0$.

\end{defin}

\begin{ex}\label{exsflags}

For the flag manifolds of the classical groups (see, for example, \cite{arv}) 

\begin{enumerate}

\item[] $G/K = SU(d)/S(U(d_1) \times \cdots \times U(d_s))$ ($d = d_1 + \cdots + d_s$, $s \geq 1$): 

\item[] $G/K = Sp(d)/U(d_1) \times \cdots \times U(d_s) \times Sp(l)$

\item[] $G/K = SO(2d+1)/U(d_1) \times \cdots \times U(d_s) \times SO(2l+1)$

\item[] $G/K = SO(2d)/U(d_1) \times \cdots \times U(d_s) \times SO(2l)$

($d = d_1 + \cdots + d_s + l$, $s, l \geq 0, l \neq 1$)

\end{enumerate}

\noindent it is easy to see that a minor $\Delta_k$ is admissible if and only if $k = d_1 + \cdots + d_j$, for some $j=1, \dots, s-1$ (resp. $j=1, \dots, s$) in the case $G=SU(d)$ (resp. in all the other cases).

\end{ex}

We have the following:

\begin{theor}\label{teorpotential}(\cite{A-P2}, Proposition 8.2)
Let $F = G/K$, $G= SU(d), Sp(d), SO(2d)$, $SO(2d+1)$, be a flag manifold represented by a painted Dynkin diagram endowed with the canonical equipment $\Pi_{can}$ given in the Appendix and let $\{ \alpha_{k_1}, \dots, \alpha_{k_m} \}$, $k_1 < \cdots < k_m$, be the set of black nodes, with associated fundamental weights $\bar \alpha_{k_i}$, $i=1, \dots, m$. Let $F$ be endowed with the $G$-invariant complex structure determined by $Q = R_M^+$. Then, in the holomorphic coordinates $z = (z_{\alpha})_{\alpha \in -Q}$ defined in (\ref{complexcoordinates}) on the open dense subset $F_{reg}$, a K\"ahler potential for the invariant K\"ahler form $\omega_{\xi}$, where $\xi = \sum_{j=1}^m c_j \bar \alpha_{k_j}$, $c_j > 0$,  is 

\begin{equation}\label{potential}
z = (z_{\alpha})_{\alpha \in -Q}  \mapsto \sum_{j=1}^m c_j \ln \Delta_{k_1 + \cdots + k_j}({}^T \overline{\exp(Z(z))} \exp(Z(z)) )
\end{equation}
 where 

\begin{equation}\label{Z(z)}
Z(z) = \sum_{\alpha \in -Q} z_{\alpha} E_{\alpha} 
\end{equation}

 and $\Delta_{k_1+\cdots+k_j}$ is the $j$-th $F$-admissible minor.

\end{theor}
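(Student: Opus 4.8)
The plan is to reduce the assertion to the case of a single fundamental weight, and then to recognise the resulting function as the logarithm of the squared norm of a highest weight orbit, which is the classical description of a \K\ potential of a flag manifold on its big cell. By Proposition~\ref{transgression} the map $\xi\mapsto\omega_\xi$ is linear, so for $\xi=\sum_{j=1}^m c_j\bar\alpha_{k_j}$ we have $\omega_\xi=\sum_{j=1}^m c_j\,\omega_{\bar\alpha_{k_j}}$; since a \K\ potential of a finite sum of closed $(1,1)$-forms is the sum of the potentials, it suffices to show that, for each black node $\alpha_{k_i}$, the function
\[
\Phi_i(z):=\ln\Delta^{(i)}\!\big({}^T\overline{\exp Z(z)}\,\exp Z(z)\big),
\]
where $\Delta^{(i)}$ denotes the $i$-th $F$-admissible minor, of some order $r$, satisfies $\omega_{\bar\alpha_{k_i}}=\tfrac{i}{2\pi}\partial\bar\partial\Phi_i$ on $F_{reg}$.

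I would then use the representation-theoretic description of $\omega_{\bar\alpha_{k_i}}$: by Borel--Weil the dominant weight $\bar\alpha_{k_i}$ corresponds to an irreducible $G^{\bC}$-module $V_i$ with highest weight vector $v_+$, the flag manifold $F$ embeds $G^{\bC}$-equivariantly in $\bP(V_i)$ by $o\mapsto[v_+]$, and $\omega_{\bar\alpha_{k_i}}$ coincides (for the invariant metric built from a $G$-invariant inner product on $V_i$), up to a normalising constant to be pinned down below, with the pullback of the Fubini--Study form, whose \K\ potential in an affine chart is $\log$ of the squared norm. Over $F_{reg}\simeq G^{-Q}$ a point is represented by $A=\exp Z(z)$ and under the embedding it goes to $[\rho_i(A)v_+]$; hence, up to an additive pluriharmonic term, $\Phi_i$ must equal $z\mapsto\log\|\rho_i(A)v_+\|^2$. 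The step that turns this into a minor is the following: with the matrix realisations of the classical groups and the canonical equipments $\Pi_{can}$ of the Appendix, $V_i$ sits inside the $r$-th exterior power $\Lambda^{r}\bC^N$ of the standard representation (here $r$ is the order of $\Delta^{(i)}$), its highest weight vector is $v_+=e_1\wedge\dots\wedge e_{r}$, and the invariant metric is the one induced by the standard metric on $\bC^N$; hence $\|\rho_i(A)v_+\|^2=\|(\Lambda^{r}A)(e_1\wedge\dots\wedge e_{r})\|^2$ equals the Gram determinant of $Ae_1,\dots,Ae_{r}$, which is precisely the principal $r\times r$ minor of ${}^T\bar A A={}^T\overline{\exp Z(z)}\,\exp Z(z)$, that is $\Delta^{(i)}$.

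The $F$-admissibility hypothesis is exactly what makes this construction descend to $F_{reg}$. A point of $F_{reg}$ is a coset $AP$ with $A\in G^{-Q}$ and $P=K^{\bC}G^Q$, so for $z\mapsto\rho_i(A)v_+$ to be well defined the parabolic $P$ must stabilise the line $\bC v_+$: the positive black root vectors spanning $\mathfrak{g}^Q$ are raising operators, hence they annihilate the highest weight vector $v_+$ and $\exp(\mathfrak{g}^Q)$ fixes $v_+$, while the condition in Definition~\ref{admissible} says precisely that $K^{\bC}$ preserves $\operatorname{span}(e_1,\dots,e_{r})$, and hence scales $v_+=e_1\wedge\dots\wedge e_{r}$. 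Therefore $\Phi_i$ is a genuine real-analytic function on $F_{reg}$ with $\tfrac{i}{2\pi}\partial\bar\partial\Phi_i=\omega_{\bar\alpha_{k_i}}$, and summing with the weights $c_j$ gives $\tfrac{i}{2\pi}\partial\bar\partial\big(\sum_j c_j\Phi_j\big)=\omega_\xi$, as claimed. To be fully rigorous one still has to fix the normalisation, i.e.\ to check that $\omega_{\bar\alpha_{k_i}}$ as written in (\ref{omegaxi}) really equals $\tfrac{i}{2\pi}\partial\bar\partial\log\|\rho_i(A)v_+\|^2$ with no stray constant; since both sides are $G$-invariant $(1,1)$-forms and $F$ is homogeneous it is enough to compare them at $o$ (i.e.\ at $z=0$), which reduces to expanding $\exp Z(z)=I+Z(z)+\tfrac12 Z(z)^2+\cdots$, applying $\ln\det(I+M)=\operatorname{tr}M-\tfrac12\operatorname{tr}M^2+\cdots$ to the upper-left $r\times r$ block, reading off the $z_\alpha\bar z_\beta$ coefficient and matching it with $\tfrac{2\langle\bar\alpha_{k_i},\alpha\rangle}{\langle\alpha,\alpha\rangle}$ by means of the relations $[E_\alpha,E_\beta]=N_{\alpha\beta}E_{\alpha+\beta}$ and the normalisations built into $\Pi_{can}$.

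I expect the main obstacle to be the case-by-case verification underlying the second paragraph. For $G=SU(d)$ the identification $V_i=\Lambda^{r}\bC^d$ with $v_+=e_1\wedge\dots\wedge e_{r}$ is immediate, but for $G=Sp(d),\,SO(2d),\,SO(2d+1)$ one has to check, using the explicit equipments of the Appendix, that the fundamental module attached to the black node $\alpha_{k_i}$ is the correct irreducible summand of $\Lambda^{r}\bC^N$ (the kernel of the contraction $\Lambda^{r}\to\Lambda^{r-2}$ in the orthogonal and symplectic cases), that $e_1\wedge\dots\wedge e_{r}$ lies in it and is a highest weight vector there, and that $F$-admissibility of the associated minor is equivalent to $P$-stability of the line $\bC(e_1\wedge\dots\wedge e_{r})$. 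One should also note that the classical flag manifolds of Example~\ref{exsflags} involve only ``vector-type'' fundamental weights (no spin weights), consistently with the fact that the non-unitary factors of $K$ contribute no black nodes.
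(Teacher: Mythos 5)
First, note that the paper does not actually prove Theorem \ref{teorpotential}: it is quoted verbatim from \cite{A-P2} (Proposition 8.2), so there is no internal proof to compare with. Your strategy (linearity in $\xi$, Borel--Weil realisation of $\omega_{\bar\alpha_{k_i}}$ as the pullback of a Fubini--Study form through the highest--weight orbit, and the identification of the admissible minor $\Delta_r({}^T\overline{A}A)$ with the Gram determinant $\|Ae_1\wedge\dots\wedge Ae_r\|^2$) is indeed the classical route and is essentially how such potentials are obtained in \cite{A-P2}; for $G=SU(d)$, $Sp(d)$, and the orthogonal cases in which the non-unitary factor of $K$ is nontrivial, your outline is sound, modulo the normalisation check at $o$ that you correctly flag (and which is genuinely delicate, since the root vectors $E_\alpha$ of the Appendix are not Weyl-normalised: e.g.\ the long-root vectors of $Sp(d)$ give $[E_{2e_i},E_{-2e_i}]$ equal to a multiple of the coroot).

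The genuine gap is in your closing claim that the flag manifolds of Example \ref{exsflags} involve no spin weights. This is false precisely in the cases $l=0$: for $SO(2d+1)/U(d_1)\times\cdots\times U(d_s)$ the node $\alpha_d=e_d$ is black, and for $SO(2d)/U(d_1)\times\cdots\times U(d_s)$ the node $\alpha_d=e_{d-1}+e_d$ (and also $\alpha_{d-1}$ when $d_s=1$) is black. For such nodes your key identification collapses: the fundamental module is a (half-)spin representation, which is not a submodule of $\Lambda^{r}\bC^{N}$, and the vector $e_1\wedge\dots\wedge e_d$ has weight $e_1+\cdots+e_d=2\bar\alpha_d$ (resp.\ $e_1+\cdots+e_{d-1}=\bar\alpha_{d-1}+\bar\alpha_d$ in type $D$), not the fundamental weight. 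Hence $\ln\Delta_d$ is a potential for the invariant form attached to $2\bar\alpha_d$ rather than to $\bar\alpha_d$, a discrepancy that is relative to the other (vector-type) black nodes and therefore cannot be absorbed into an overall convention; to treat these cases one needs extra input, e.g.\ Cartan's identity expressing $\Delta_d({}^T\overline{A}A)$ as the square of the norm of the spin-orbit map, or a change of weight basis from the $\epsilon_{k_j}=e_1+\cdots+e_{k_j}$ to the $\bar\alpha_{k_j}$ (which also makes it worth checking against \cite{A-P2} exactly which weights the constants $c_j$ refer to, and resolving the paper's own indexing wrinkle $\Delta_{k_1+\cdots+k_j}$ versus $\Delta_{k_j}$). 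A smaller point: Definition \ref{admissible} is a row condition (block lower-triangularity of $K^{\bC}$), whereas your construction needs $K^{\bC}$ to preserve the column span of $e_1,\dots,e_r$; for the explicit stabilisers at hand the two coincide, but as stated your equivalence is reversed and needs a line of justification.
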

Notice that, by Remark \ref{remarkalternativeQ}, the assumptions on the equipment and on $Q$ are not restrictive. In order to prove the main result of this section, Theorem \ref{diastasis} below, we need the following:

\begin{lem}\label{Lemmaexp}
Let $F = G/K$, $G= SU(d), Sp(d), SO(2d), SO(2d+1)$, be a flag manifold endowed with the complex structure associated to the maximal closed nonsymmetric subset $Q = R_M^+$ with respect to the canonical equipment $\Pi_{can}$. Let $Z(z) \in {\mathfrak{g}}^{-Q}$ given by (\ref{Z(z)}). Then, for every $i, j = 1, \dots, d$ we have:

\begin{enumerate}

\item[(a)] If the entry $\exp(Z(z))_{ij}$, $i \neq j$, is non-identically zero then the same is true for $Z(z)_{ij}$;

\item[(b)] $\exp(Z(z))_{ii} = 1$.

\end{enumerate}

\end{lem}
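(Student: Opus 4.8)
The plan is to exploit the structure of the element $Z(z)\in\mathfrak{g}^{-Q}$, namely that it is a linear combination $Z(z)=\sum_{\alpha\in -Q}z_\alpha E_\alpha$ of root vectors for \emph{negative} roots $\alpha\in -Q$ (with respect to the canonical equipment), hence $Z(z)$ is a \emph{strictly lower triangular} matrix in the standard matrix realizations of the classical groups given in the Appendix. The point is that, in the canonical equipment, the root vector $E_\alpha$ for $\alpha\in R^+$ is (essentially) an elementary matrix $E_{pq}$ with $p<q$, possibly combined with its transpose-type partner for the symplectic/orthogonal cases, so that $E_{-\alpha}$ sits strictly below the diagonal. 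Therefore $Z(z)$ is nilpotent and strictly lower triangular, and $\exp(Z(z))=\Id+Z(z)+\tfrac12 Z(z)^2+\cdots$ is a finite sum (truncating at the $(d-1)$-st power).

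\textbf{Proof of (b).} Since $Z(z)$ is strictly lower triangular, every positive power $Z(z)^r$, $r\geq 1$, is also strictly lower triangular, in particular has zero diagonal. Hence $\exp(Z(z))=\Id+\sum_{r\geq 1}\tfrac{1}{r!}Z(z)^r$ has the same diagonal as $\Id$, so $\exp(Z(z))_{ii}=1$ for all $i$. This is immediate once the triangularity is established.

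\textbf{Proof of (a).} Here I would argue by a "leading term'' comparison. Write $\exp(Z(z))_{ij}=Z(z)_{ij}+\sum_{r\geq 2}\tfrac1{r!}(Z(z)^r)_{ij}$ for $i\neq j$; since $Z(z)$ is strictly lower triangular, all nonzero contributions occur for $i>j$. The entry $(Z(z)^r)_{ij}$ is a sum of products $Z(z)_{i p_1}Z(z)_{p_1 p_2}\cdots Z(z)_{p_{r-1} j}$ with $i>p_1>\cdots>p_{r-1}>j$, and each factor $Z(z)_{p q}$ is (up to sign) the coordinate $z_\alpha$ for the negative root $\alpha$ with $E_{-\alpha}$ having a nonzero $(p,q)$-entry, or $0$ if no such root exists. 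The key combinatorial input, which one reads off from the explicit canonical equipment in the Appendix, is the following: if the $(i,j)$-entry of $\exp(Z(z))$ is not identically zero then there is \emph{some} monomial in the $z_\alpha$'s that survives, and among all such monomials the ones of \emph{lowest degree} cannot cancel against higher-degree ones; and a degree-one monomial $z_\alpha$ with $E_{-\alpha}$ supported on $(i,j)$ is present precisely when $Z(z)_{ij}\not\equiv 0$. So it suffices to show: if $\exp(Z(z))_{ij}\not\equiv 0$ then the degree-one part $Z(z)_{ij}$ is nonzero. Equivalently, if $Z(z)_{ij}\equiv 0$ (i.e.\ no simple-root-type generator connects index $j$ to index $i$ directly) then in fact $\exp(Z(z))_{ij}\equiv 0$ as well. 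I would prove the contrapositive form: if $Z(z)_{ij}\equiv 0$ then every path $i>p_1>\cdots>j$ with all consecutive $Z(z)$-entries nonzero would have to ``jump over'' the missing link, but the structure of $R_M^+$ (the black roots) and the shape of $\mathfrak{g}^{-Q}$ force the support of $Z(z)$ to be, within each row, an \emph{interval}-like pattern dictated by which nodes are painted black — so a nonzero product of entries from $j$ up to $i$ forces a nonzero direct entry $Z(z)_{ij}$ too. Concretely, one checks that the index set $\{1,\dots,d\}$ is partitioned into blocks by the black nodes, $\mathfrak{m}^{\bC}$ consists of off-block-diagonal entries, and $Q=R_M^+$ picks out the strictly-below-block-diagonal ones; a product of such generators from column $j$ to row $i$ lands in a block pair $(B_s,B_t)$ with $s>t$, and the $(i,j)$ generator for that same block pair is always a genuine black root, hence $Z(z)_{ij}\not\equiv 0$. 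This case analysis must be run for each of $SU(d),Sp(d),SO(2d),SO(2d+1)$ using the matrix realizations in the Appendix.

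\textbf{Main obstacle.} The routine part is the triangularity and (b); the real work is (a), and specifically ruling out ``miraculous'' cancellation of all degree-one terms while a higher-degree term survives, and checking that the block/interval structure of the support of $Z(z)$ behaves as claimed in the four classical families — the symplectic and orthogonal cases are more delicate than $SU(d)$ because the matrix realizations involve the anti-diagonal form and the generators come in $\pm$ pairs, so one must verify the support combinatorics carefully against the canonical equipment of the Appendix rather than quoting the $\mathfrak{sl}$ picture. I would organize the proof so that (a) is reduced to the single statement ``the support of $Z(z)$, closed under the relevant matrix multiplication, stays inside the support of $Z(z)$ together with its below-block-diagonal nonzero pattern,'' and then dispatch that statement family-by-family.
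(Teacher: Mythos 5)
Your core idea (chains of nonzero entries in $Z(z)$ force the direct entry to be nonzero) is the right one, but as written the proposal has two problems. First, the structural claim it rests on is false outside $SU(d)$: for $G=Sp(d), SO(2d), SO(2d+1)$ the element $Z(z)$ is \emph{not} strictly lower triangular in the matrix realizations of the Appendix. Indeed, for $\alpha=-(e_i-e_j)\in -Q$ with $i<j$ the root vector has lower-right $d\times d$ block $-E_{ij}$, which lies strictly \emph{above} the diagonal of the full matrix; $Z(z)$ is only block lower triangular, of the form having upper-left block $\tilde Z(z)$ (strictly lower triangular) and lower-right block $-{}^T\tilde Z(z)$ (strictly upper triangular). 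So your proof of (b) starts from a false premise. It is repairable: every $E_\alpha$, $\alpha\in -Q$, has vanishing upper-right block, hence the upper-left $d\times d$ block of $\exp(Z(z))$ equals $\exp(\tilde Z(z))$, and since the lemma only concerns indices $i,j=1,\dots,d$ one may replace $Z(z)$ by $\tilde Z(z)$; this block reduction is exactly what the paper does, and it is the \emph{only} family-by-family input needed.

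Second, and more seriously, part (a) --- which you yourself single out as ``the real work'' --- is not actually proved: you reduce it to a combinatorial claim about the support of $Z(z)$ (block/interval pattern, absence of ``miraculous cancellation'') and then defer that claim to an unexecuted case analysis over the four classical families. Both the cancellation worry and the case analysis are unnecessary, because the chain argument can be run abstractly using only Definition \ref{defQ}. If $\exp(Z(z))_{ij}\not\equiv 0$ then some power satisfies $(Z(z)^k)_{ij}\not\equiv 0$ (no cancellation enters at this step), and one shows by induction on $k$ that this forces $Z(z)_{ij}\not\equiv 0$: a nonvanishing term $(Z(z)^{k-1})_{il}\,Z(z)_{lj}$ gives, by the inductive hypothesis, $e_i-e_l\in -Q$ and $e_l-e_j\in -Q$, and closedness (property (iii) of Definition \ref{defQ}) yields $e_i-e_j\in -Q$, i.e. $Z(z)_{ij}\not\equiv 0$; nonsymmetry (property (ii)) then shows all diagonal entries of all powers vanish, which is (b). This uses nothing about how the diagram is painted, holds for an arbitrary maximal closed nonsymmetric $Q$ (as the paper remarks), and avoids entirely the support combinatorics you propose to verify family by family. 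So your plan could be completed, but the decisive step is missing, and the uniform argument via (ii)--(iii) of Definition \ref{defQ} is both shorter and more general.
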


\begin{proof} Let us first deal with the case $G = SU(d)$. If $\exp(Z(z))_{ij}$, $i \neq j$, is non-identically zero, then there exists $k > 0$ such that $(Z(z)^k)_{ij} \neq 0$. So part $(a)$ will be proved if we show that if $(Z(z)^k)_{ij}$ is non-identically zero then the same is true for $Z(z)_{ij}$. Let us see this by induction on $k$. For $k=1$ this is obvious. For $k >1$, from $(Z(z)^k)_{ij} = \sum_{l=1}^d (Z(z)^{k-1})_{il} Z(z)_{lj} \neq 0$ it follows that there exists $l = 1, \dots, d$ such that $(Z(z)^{k-1})_{il} \neq 0$ and $Z(z)_{lj} \neq 0$. By the inductive assumption, we have $Z(z)_{il} \neq 0$ and $Z(z)_{lj} \neq 0$. But  for $Z(z) \in {\mathfrak{g}}^{-Q}$ we have $Z(z)_{ij} \neq 0$ $\Leftrightarrow e_i - e_j \in -Q$ (see the Appendix). So we have $e_i - e_l \in -Q$ and $e_l - e_j \in -Q$, and by Definition \ref{defQ} (iii) we have $e_i - e_j = (e_i - e_l) + (e_l - e_j) \in -Q$, which in turn implies $Z(z)_{ij} \neq 0$, as required.

 To show $(b)$, we prove that $(Z^k)_{ii} = 0$ for every $k >0$. For $k=1$, this follows from $Z(z) = \sum_{\alpha \in -Q} z_{\alpha} E_{\alpha}$ and from the definition of $E_{\alpha}$ given in the Appendix. Let $k > 1$: if in the sum $(Z(z)^k)_{ii} = \sum_{l=1}^d (Z(z)^{k-1})_{il} Z(z)_{li}$ we have $(Z(z)^{k-1})_{il} \neq 0$, $Z(z)_{li} \neq 0$ for some $l \neq i$,  it follows by $(a)$ that $Z(z)_{il} \neq 0$, so that $e_i-e_l, e_l-e_i \in -Q$, against Definition \ref{defQ} (ii). So $(Z(z)^k)_{ii} = (Z(z)^{k-1})_{ii} Z(z)_{ii} = 0$ by the case $k=1$. (Notice that the above arguments hold true for general maximal closed nonsymmetric subset $Q$).

 Let now $G = Sp(d), SO(2d), SO(2d+1)$. By the explicit matrix representation of the root vectors $E_{\alpha}$ given in the Appendix we see that, under the assumption $Q = R_M^+$, in all these cases the matrices $E_{\alpha}$, $\alpha \in -Q$ are block matrices of the kind $\left( \begin{array}{cc}
* & 0 \\
* & *  
\end{array} \right)$, so that we have $\exp(Z(z)) = \left( \begin{array}{cc}
\exp(\tilde Z(z)) & 0 \\
* & *  
\end{array} \right)$, where $\tilde Z(z) \in M_d(\bC)$ is the matrix having $z_{\alpha}$ in the place $ij$ if $e_i - e_j = \alpha \in -Q$ and $0$ elsewhere. Then, by the assumption $i, j = 1, \dots, d$, to conclude the proof of $(a)$, $(b)$ it is enough to show respectively that $(a')$ If the entry $\exp(\tilde Z(z))_{ij}$, $i \neq j$, is non-identically zero then the same is true for $\tilde Z(z)_{ij}$; $(b')$ $\exp(\tilde Z(z))_{ii} = 1$. But, since $\tilde Z(z)$ is constructed by using only the roots in $-Q$ of the kind $e_i - e_j$, these two claims are true by the same arguments used for the case $G = SU(d)$. 
\end{proof}

Now we are ready to prove the main result of this section.

\begin{theor}\label{diastasis}
Let $F$ be a flag manifold of classical type. 
Then the potential given by formula (\ref{potential}) is Calabi's diastasis function.
Moreover, $F_{reg}\subset F$ is the domain of maximal extension of Calabi's diastasis function.  
\end{theor}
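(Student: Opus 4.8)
The plan is to verify the two defining properties of Calabi's diastasis function for the potential $\Phi_\xi(z)=\sum_{j=1}^m c_j\ln\Delta_{k_1+\cdots+k_j}({}^T\overline{\exp(Z(z))}\exp(Z(z)))$ given by \eqref{potential}, using the base point $o=K\in F_{reg}$ as reference point. Recall that Calabi's diastasis function $D(p,q)$ is characterized as follows: if $\phi$ is any \K\ potential defined near $p$, and one writes its analytic continuation as $\tilde\phi(z,\bar w)$ (holomorphic in $z$, anti-holomorphic in $w$, agreeing with $\phi$ on the diagonal), then $D(p,q)=\tilde\phi(q,\bar q)+\tilde\phi(p,\bar p)-\tilde\phi(p,\bar q)-\tilde\phi(q,\bar p)$; equivalently, near a fixed point $p$ the diastasis is the unique \K\ potential $D_p(\cdot)=D(p,\cdot)$ whose Taylor expansion in normal-type holomorphic coordinates centered at $p$ contains no purely holomorphic and no purely anti-holomorphic terms (the ``$z^a$'' and ``$\bar z^b$'' monomials all vanish). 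So the first step is to compute the polarization $\widetilde{\Phi_\xi}(z,\bar w)=\sum_j c_j\ln\Delta_{k_1+\cdots+k_j}({}^T\overline{\exp(Z(w))}\exp(Z(z)))$ and check that at the base point, i.e. at $z=0$, it kills the purely holomorphic part: concretely $\widetilde{\Phi_\xi}(z,\bar 0)=\sum_j c_j\ln\Delta_{k_1+\cdots+k_j}(\exp(Z(z)))$, and by Lemma \ref{Lemmaexp}(b) together with the block-triangular structure of $\exp(Z(z))$ for $Z(z)\in\mathfrak g^{-Q}$, each admissible principal minor $\Delta_{k_1+\cdots+k_j}(\exp(Z(z)))$ equals $1$ identically in $z$ (the top-left block of $\exp(\tilde Z(z))$ is lower-triangular with $1$'s on the diagonal, and the admissibility of the minor guarantees it only sees this block). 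Hence $\widetilde{\Phi_\xi}(z,\bar 0)\equiv 0$, and by conjugation $\widetilde{\Phi_\xi}(0,\bar w)\equiv 0$ as well; thus $\Phi_\xi$ has vanishing holomorphic and anti-holomorphic parts at $o$, so $\Phi_\xi=D(o,\cdot)=D_o$ on $F_{reg}$. By homogeneity ($G$ acts transitively by biholomorphic isometries and the diastasis is invariant under such), the same local characterization holds at every point, so $\Phi_\xi$, up to the natural additive ambiguity inherent in a potential written in one coordinate chart, is Calabi's diastasis; more precisely, $D_o=\Phi_\xi$ globally on $F_{reg}$ because both are \K\ potentials for $\omega_\xi$ agreeing to infinite order at $o$ with the no-holomorphic-terms normalization, and such a potential is unique.

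For the second assertion — that $F_{reg}$ is the maximal domain of definition of $D_o$ — I would argue that $D_o$ cannot extend past $F_{reg}$. The key is that $\Phi_\xi(z)=\sum_j c_j\ln\Delta_{k_1+\cdots+k_j}(H(z))$ where $H(z)={}^T\overline{\exp(Z(z))}\exp(Z(z))$ is a positive-definite Hermitian matrix for $z\in F_{reg}$, and $F_{reg}\simeq G^{-Q}\simeq\bC^n$ via the exponential coordinates. On the boundary of $F_{reg}$ inside $F$, a sequence $z_\nu\to\partial F_{reg}$ corresponds to $\exp(Z(z_\nu))$ leaving the big cell $G^{-Q}K^{\bC}G^Q$, and one of the admissible minors $\Delta_{k_1+\cdots+k_j}(H(z_\nu))$ degenerates (tends to $0$), so that $\Phi_\xi(z_\nu)\to -\infty$. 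A genuinely continuous (indeed smooth) \K\ potential cannot blow up to $-\infty$ at an interior point of its domain, so $D_o$ does not extend continuously across $\partial F_{reg}$. Conversely $F_{reg}$ is exactly the locus where all admissible minors are nonzero (equivalently where $\exp(Z(z))\in G^{\bC}_{reg}$, the big Bruhat-type cell), so $\Phi_\xi$ is real-analytic and well-defined on all of $F_{reg}$; hence $F_{reg}$ is precisely the maximal domain.

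The step I expect to be the main obstacle is making the ``blows up to $-\infty$ at the boundary'' argument fully rigorous: one must show that for \emph{every} approach to \emph{every} boundary point of $F_{reg}$ in $F$, at least one of the relevant admissible minors $\Delta_{k_1+\cdots+k_j}(H(z))$ actually tends to zero (rather than merely that $\exp(Z(z))$ leaves the open cell, which a priori could happen while all the \emph{admissible} minors stay bounded away from $0$). This requires identifying the complement $F\setminus F_{reg}$ with the zero locus of the product of the admissible minors — i.e. understanding the Schubert-cell / Bruhat-decomposition picture of $G^{\bC}/K^{\bC}G^Q$ well enough to see that the boundary divisor of the big cell is cut out precisely by these minors. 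I would handle this by working in the explicit matrix models of the classical groups from the Appendix, where $F$ embeds (via Plücker-type coordinates built from the admissible minors) into a product of projective spaces, so that $F_{reg}$ is the affine chart where none of the homogeneous Plücker coordinates given by admissible minors vanishes, and the boundary is their common vanishing — at which point the divergence of $\ln$ of the corresponding minor is immediate. A secondary, more routine point is checking that $\Phi_\xi$ indeed has no purely (anti-)holomorphic Taylor terms at a general point of $F_{reg}$, which follows from $G$-homogeneity combined with the fact that $\omega_\xi$ is $G$-invariant and the diastasis transforms correctly under isometric biholomorphisms.
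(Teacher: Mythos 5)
Your first part is correct, and it takes a genuinely shorter route than the paper. Where the paper proves the vanishing of the purely holomorphic Taylor coefficients of $\Delta_l\bigl({}^T\overline{\exp(Z(z))}\exp(Z(z))\bigr)$ by expanding the determinant, invoking Lemma \ref{Lemmaexp} and deriving a contradiction with Definition \ref{defQ} (ii)--(iii) from a cyclic permutation, you polarize the potential and observe that $\Delta_l(\exp(Z(z)))\equiv 1$ for every admissible minor, because $\exp(Z(z))$ is block lower-triangular with unipotent lower-triangular $d\times d$ top-left block and $l\le d$ by admissibility. This is the same structural input (the triangular form of $\mathfrak g^{-Q}$ under the canonical equipment, which the paper only exploits later, in the maximality part), packaged more efficiently; the paper's combinatorial argument has the merit of using only Lemma \ref{Lemmaexp} and the abstract properties of a maximal closed nonsymmetric set $Q$, but your shortcut is sound.

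The second part, however, contains a genuine error, and it sits exactly at the step you flagged as the main obstacle. The admissible minors $P_j(z,\bar z)=\Delta_{k_1+\cdots+k_j}\bigl({}^T\overline{\exp(Z(z))}\exp(Z(z))\bigr)$ never degenerate: by Cauchy--Binet, $P_j$ is the sum of the squared moduli of all $l\times l$ minors of the first $l$ columns of $\exp(Z(z))$, and one of these is the leading minor, which equals $1$ by the very triangular structure you used in part one; hence $P_j\ge 1$ everywhere on $F_{reg}$, the potential is nonnegative (as a diastasis must be), and it cannot tend to $-\infty$ anywhere. So the proposed mechanism ``some admissible minor of $H(z)$ tends to $0$, hence $\Phi_\xi\to-\infty$ at $\partial F_{reg}$'' is false as stated, and the Pl\"ucker-type identification of $F\setminus F_{reg}$ with the common zero locus of these minors cannot work (their zero locus is empty); what vanishes on $F\setminus F_{reg}$ is the distinguished homogeneous coordinate that is normalized to $1$ on the affine chart, and the correct boundary behaviour is blow-up to $+\infty$. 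The paper avoids boundary analysis altogether: since $Z(z)$ is nilpotent, each $P_j$ is a polynomial in $z,\bar z$, so if $D$ failed to tend to $+\infty$ along a complex line $z=a\xi$, $a\neq 0$, the $P_j$ would stay bounded, hence be constant, along that line; then $D$ would be constant along it, contradicting $\frac{\partial^2}{\partial\xi\partial\bar\xi}D(a\xi,\bar a\bar\xi)=\sum_{i,j}\frac{\partial^2 D}{\partial z_i\partial\bar z_j}a_i\bar a_j>0$ for a K\"ahler potential. To repair your argument you would either have to adopt this polynomial-growth argument or redo the projective picture with the correct sign and the correct vanishing object.
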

\begin{proof}
Without loss of generality we can assume  $F=G/K$ is irreducible  and hence
$G= SU(d), Sp(d), SO(2d), SO(2d+1)$.
Recall that Calabi's diastasis function is the (unique determined) potential
around a point $p$ such that, in any given system of complex coordinates $z = (z_1, \dots, z_n)$ centered at $p$,
its power series development in $z$ and $\bar z$ does not contain the monomials $z^J$ or $\bar z^J$, for any nonzero multiindex $J = (j_1, \dots, j_n) \in (\bZ_{\geq 0})^n$ (see \cite{Cal}, \cite{loidiast} and \cite{diastexp}). 

 Let us denote by the brevity of notation $A = A(z, \bar z) = {}^T \overline{\exp(Z(z))} \exp(Z(z))$, $\Psi_l(z, \bar z) = \Delta_l(A)$ and $\psi_l = \ln \Psi_l$. We are going to prove that, when $l$ is chosen so that $\Delta_l$ is an $F$-admissible minor, then the power series development of $\psi_l$ around $z=0$ does not contain the monomials $z^J$, i.e. $\frac{\partial^{|J|} \psi_l}{\partial z^J}(0) = 0$ (the case of the monomials $\bar z^J$ is similar and we leave it to the reader).

Since $Z(0)$ is the null matrix, notice that we have $\exp(Z(0)) = I$ and $\Psi_l(0)= 1$. From this and from the fact (easily seen by induction) that 
$$\frac{\partial^{|J|} \psi_l}{\partial z^J} = \frac{1}{\Psi_l} \frac{\partial^{|J|} \Psi_l}{\partial z^J} + \textrm{(terms containing derivatives of $\Psi_l$ order} < |J|) $$
one sees that if the power series development of $\Psi_l$ at $z=0$ does not contain the terms $z^J$ then the same is true for the power series development of $\psi_l$.
So, assume by contradiction that $\frac{\partial^{|J|} \Psi_l}{\partial z^J}(0) \neq 0$ for some $J \neq 0$. Then, by the very definition of determinant $\Psi_l = \Delta_l(A) = \sum_{\sigma \in S_l} \epsilon(\sigma) A_{1 \sigma(1)} \cdots A_{l \sigma(l)}$, it is clear that there exist $\sigma \in S_l$ and $K_1, \dots, K_l \in (\bZ_{\geq 0})^n$ ($K_i = 0$ is allowed), $K_1+ \cdots + K_l = J$, such that $\frac{\partial^{|K_i|} A_{i \sigma(i)}}{\partial z^{K_i}}(0) \neq 0$ for every $i = 1, \dots, l$.

But, since $A_{ij} = \sum_s \overline{\exp(Z(z))}_{si} \exp(Z(z))_{sj}$ and for every $K \in (\bZ_{\geq 0})^n$ one has

$$\frac{\partial^{|K|} A_{ij}}{\partial z^{K}}(0) = \sum_s \overline{\exp(Z(0))}_{si} \frac{\partial^{|K|} \exp(Z(z))_{sj}}{\partial z^{K}}(0) =\frac{\partial^{|K|} \exp(Z(z))_{ij}}{\partial z^{K}}(0) ,$$

\noindent we conclude that there exist $\sigma \in S_l$ and $K_1, \dots, K_l \in (\bZ_{\geq 0})^n$, $K_1+ \cdots + K_l \neq 0$, such that 

\begin{equation}\label{expnonzero}
\frac{\partial^{|K_i|} \exp(Z(z))_{i \sigma(i)}}{\partial z^{K_i}}(0) \neq 0
\end{equation}
for every $i = 1, \dots, l$.

Now, by Lemma \ref{Lemmaexp} $(b)$, if $\sigma(i) = i$, then $\exp(Z(z))_{i \sigma(i)} = 1$, so that $K_i=0$ in this case. Since $K_1 + \cdots + K_l \neq 0$, it must exist $i_1 = 1, \dots, l$ such that $\sigma(i_1) \neq i_1$. Take the cyclic permutation $i_1, i_2 = \sigma(i_1), \dots, i_s = \sigma(i_{s-1}), i_1 = \sigma(i_s)$ starting at $i_1$. By (\ref{expnonzero})  we have 

\begin{equation}\label{expnonzero2}
\exp(Z(z))_{i_1 i_2}, \exp(Z(z))_{i_2 i_3}, \dots, \exp(Z(z))_{i_s i_1} \neq 0 .
\end{equation}

Since $\{ i_1, \dots, i_s \} \subseteq \{1, \dots, l \}$ and $\Delta_l$ is an admissible minor, as we have noticed in Examples (\ref{exsflags}) we have $l \leq d$ for all the classical groups \linebreak $G = SU(d), Sp(d), SO(2d), SO(2d+1)$ and then, by Lemma \ref{Lemmaexp} $(a)$, conditions (\ref{expnonzero}) imply
$$(Z(z))_{i_1 i_2}, (Z(z))_{i_2 i_3}, \dots, (Z(z))_{i_s i_1} \neq 0 .$$

 By definition of $Z(z)$ and by the description of the root vectors given in the Appendix, it means that $e_{i_1} -e_{ i_2}, e_{i_2} - e_{i_3}, \dots, e_{i_s} -e_{i_1} \in -Q$. By Definition \ref{defQ} (iii) we have $e_{i_2} -e_{ i_3}+ \cdots + e_{i_s} -e_{ i_1} = e_{i_2} - e_{i_1} \in -Q$ which, together with $e_{i_1} -e_{ i_2} \in -Q$, contradicts Definition \ref{defQ} (ii). 

Finally, in order to prove the last assertion in the statement of the theorem we will show that $D(z, \bar z) = \sum_{j=1}^m c_j \ln \Delta_{k_1 + \cdots + k_j}({}^T \overline{\exp(Z(z))} \exp(Z(z)) )$ on $\bC^n$ goes to infinity for $z = a \xi = (a_1 \xi, \dots, a_n \xi)$, when $\xi \in \bC$, $|\xi| \rightarrow \infty$, for every  $a \neq 0$. 

First, we claim that, for each admissible minor $\Delta_l$, the function \linebreak $\Delta_{l}({}^T \overline{\exp(Z(z))} \exp(Z(z)) )$ is a polynomial in $z, \bar z$. Indeed, under the assumption $Q = R_M^+$ with respect to the canonical equipment, the matrix $Z(z)$ is clearly nilpotent lower triangular in the case $G = SU(d)$, while in the cases \linebreak $G = Sp(d), SO(2d)$ (resp. $G = SO(2d+1)$)  is a block matrix of the kind \linebreak $\left( \begin{array}{cc}
\tilde Z(z) & 0 \\
* & - {}^T \tilde Z(z)  
\end{array} \right)$ (resp. of the kind $\left( \begin{array}{ccc}
\tilde Z(z) & 0 & 0 \\
* & - {}^T \tilde Z(z) & * \\
* & 0 & 0  
\end{array} \right)$), where $\tilde Z(z)$ is nilpotent lower triangular, which is easily seen to imply again that $Z(z)$ is nilpotent. So, in all the cases, the entries of $\exp(Z(z))$ are polynomials in $z$, which clearly implies the claim. 
Set $P_j(z, \bar z) = \Delta_{k_1 + \cdots + k_j}({}^T \overline{\exp(Z(z))} \exp(Z(z)) )$. The diastasis $D = \sum_{j=1}^m c_j \ln P_j = \ln (P_1^{c_1} \cdots P_m^{c_m})$, with $c_j > 0$, does not tend to infinity in the direction $z = a \xi$ if and only if $P_j(a \xi, \bar a \bar \xi)$, $j = 1, \dots, m$ stays bounded, which for a polynomial is possibile only when it is constant. This implies that the potential itself is constant along the same direction, i.e. $\phi(\xi) := D(a \xi, \bar a \bar \xi)$ is constant. But this contradicts the fact that $D$ is a potential for a \K\ metric, since we have

$$0 = \frac{\partial^2 \phi}{\partial \xi \partial \bar \xi} = \sum_{i,j=1}^n \frac{\partial^2 D}{\partial z_i \partial \bar z_j}(a \xi, \bar a \bar \xi) a_i \bar a_j$$
which is not possible if $a \neq 0$.
\end{proof}

\begin{remark}\rm
\rm We believe that our results generalize to the homogeneous manifolds of the exceptional groups $E_i$ ($i=6,7,8$), $F_4$, $G_2$ (see, for example, Table 4 in \cite{BFR} for a complete list in terms of painted Dynkin diagrams). In particular,  in \cite{DHL}, by using the theory of Jordan triple systems, it was proved that the symmetric spaces of both classical and exceptional groups admit a system of complex coordinates defined on an open dense subset biholomorphic to $\bC^n$.
\end{remark}

\section{Proof of Theorem \ref{mainteor}}\label{secmain}
In order to prove Theorem \ref{mainteor} we need the following two lemmata which show how to 
extend $L^2$-holomorphic functions to holomorphic sections of holomorphic   line bundles.


\begin{lem}\label{extensionsection}
Let $M$ be a compact Hodge manifold of complex dimension $n$ with \K\ form $\omega$
and let $(L, h)$ be a Hermitian line bundle such that $\Ric (h)=\omega$.
Assume that there exists a divisor $Y\subset M$ such  that the restriction of  $L$ to ${M\setminus Y}$ 
is the  trivial holomorphic line bundle 
and let $\sigma :M\setminus Y\rightarrow L$ be a trivializing holomorphic section.
Let $f$ be a holomorphic function on $M\setminus Y$ such that
$$\int_{M\setminus Y} |f (x)|^2h(\sigma (x), \sigma (x))
\frac{\omega ^{n}}{n!}<\infty .$$
Then $f$ extends to a  (unique) global holomorphic section, namely there exists $s\in H^0(L)$ such that 
$s(x)=f(x)\sigma (x)$ for all $x\in M\setminus Y$.
\end{lem}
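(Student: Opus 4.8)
The plan is to view the problem as a removable-singularity statement. We have a holomorphic function $f$ on $M \setminus Y$, and the trivializing section $\sigma$ identifies $f$ with a holomorphic section $f\sigma$ of $L|_{M \setminus Y}$; the task is to show this section extends holomorphically across the divisor $Y$. Uniqueness is immediate from the identity principle, since $M \setminus Y$ is dense in the connected manifold $M$ (here I use that $Y$, being a divisor, has complex codimension one and hence real codimension two, so $M \setminus Y$ is connected and dense). So the whole content is the existence of the extension, which is a local statement near each point of $Y$.

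First I would reduce to a local computation: fix $p \in Y$ and a coordinate polydisc $U \ni p$ on which $L$ is holomorphically trivialized by some section $\tau : U \to L$, and on which $Y \cap U = \{w_1 = 0\}$ for a holomorphic coordinate $w_1$ (shrinking $U$ so that $Y$ is smooth there, or more generally using that $Y$ is cut out by a single holomorphic function). On $U \setminus Y$ we can write $f\sigma = g \tau$ for a holomorphic function $g$ on $U \setminus Y$, and the extension exists near $p$ iff $g$ extends holomorphically across $\{w_1 = 0\}$. By the Riemann removable singularity theorem (in the form for analytic sets / for functions holomorphic off a hypersurface), it suffices to show $g$ is locally bounded near $Y \cap U$, or even just locally $L^2$ with respect to Lebesgue measure, which by Riemann's theorem forces holomorphic extension.

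The key step is therefore translating the given global $L^2$-bound into a local $L^2$-bound on $g$. On $U$ the quantities $h(\tau,\tau)$ and the volume form $\omega^n/n!$ are both smooth and positive, hence bounded above and below by positive constants on a slightly smaller polydisc $U' \Subset U$. On $U' \setminus Y$ we have $f\sigma = g\tau$, so $h(f\sigma, f\sigma) = |g|^2 h(\tau,\tau)$, and thus
\begin{equation*}
\int_{U' \setminus Y} |g|^2 \, d\lambda \;\leq\; C \int_{U' \setminus Y} |g(x)|^2 h(\tau(x),\tau(x)) \frac{\omega^n}{n!} \;=\; C \int_{U' \setminus Y} |f(x)|^2 h(\sigma(x),\sigma(x)) \frac{\omega^n}{n!} \;<\; \infty,
\end{equation*}
where $d\lambda$ is Lebesgue measure and $C$ absorbs the comparison constants; the middle equality is just the pointwise identity $|g|^2 h(\tau,\tau) = |f|^2 h(\sigma,\sigma)$ on the overlap $U' \setminus Y$ (both express $h(f\sigma,f\sigma)$). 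So $g \in L^2_{\mathrm{loc}}$ near $Y \cap U$, and Riemann's extension theorem gives a holomorphic $\tilde g$ on $U'$ agreeing with $g$ off $Y$; then $\tilde g \tau$ is the desired local extension of $f\sigma$ as a holomorphic section of $L$ over $U'$. Patching these local extensions (they agree on overlaps with $f\sigma$ on a dense set, hence everywhere by the identity principle) yields the global section $s \in H^0(L)$ with $s = f\sigma$ on $M \setminus Y$.

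The main obstacle — really the only non-bookkeeping point — is making sure the removable singularity theorem applies in the right generality: $Y$ need not be smooth, so one should either invoke the version of Riemann's theorem for functions holomorphic outside an analytic hypersurface (equivalently, first remove the singular locus $Y_{\mathrm{sing}}$, which has codimension $\geq 2$ and across which bounded — or even just defined — holomorphic functions extend automatically, and then deal with the smooth part $Y_{\mathrm{reg}}$ by the one-variable Riemann theorem applied in the transverse direction), or cite that a locally $L^2$ holomorphic function on the complement of any analytic set of a domain extends holomorphically. Everything else is the routine comparison of metrics and volume forms on a compact coordinate patch carried out above.
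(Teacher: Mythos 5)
Your proposal is correct and follows essentially the same route as the paper: trivialize $L$ locally near $Y$, use the pointwise identity $|g|^2h(\tau,\tau)=|f|^2h(\sigma,\sigma)$ together with boundedness of the metric and volume form on compact patches to get a local $L^2$ bound on the transition-adjusted function, extend it across the analytic set $Y$ by the $L^2$ removable-singularity theorem (the paper cites Ohsawa for exactly this), and patch by the identity principle. The only cosmetic difference is that the paper writes the quotient as $f/g_\alpha$ with $\tau_\alpha = g_\alpha\sigma$ and applies the cited theorem directly to the weighted integral, while you pass to Lebesgue measure first; the substance is identical.
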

\proof
Choose   trivializing holomorphic sections  $\tau_{\alpha} :U_\alpha\rightarrow L$ defined  on open and connected open subsets $U_\alpha\subset M$
such that $Y\subset\cup_\alpha U_\alpha$.
Let $g_\alpha :U_\alpha\setminus Y\rightarrow \bC$  be the holomorphic function such that $\tau_\alpha (x) =g_\alpha(x) \sigma (x)$ for $x\in U_\alpha\setminus Y$.
It follows by the assumption that, for each $\alpha$,
$$\int_{U_\alpha\setminus Y} \frac{|f (x)|^2}{|g_\alpha(x)|^2} h(\tau_\alpha (x), \tau_\alpha (x))
\frac{\omega ^{n}}{n!}=\int_{U_\alpha\setminus Y} |f (x)|^2h(\sigma (x), \sigma (x))
\frac{\omega ^{n}}{n!}<\infty .$$
Fix such an $\alpha$.
Thus, the function  $\frac{f}{g_\alpha }:U_\alpha\setminus Y$ 
is  a  $L^2$-bounded holomorphic function  on $U_\alpha\setminus Y$.
Since the set $Y\cap U_\alpha$ is an analytic subset of $U_\alpha$ it follows by \cite[Thm. 5.17, p. 101]{ohsawa}
that $\frac{f}{g_\alpha}$ extends to a holomorphic function to all of $U_\alpha$.
The desired global holomorphic  section $s\in H^0(L)$ 
is then given by:
$$s(x)= \left\{\begin{array}{l}
f(x)\sigma (x)\ \  \ \  {\rm if \ }\ \ \ x\in M\setminus Y, \\
\frac{f(x)}{g_\alpha (x)}\tau_{\alpha} (x) \ \    {\rm if \ }\ \  x\in Y\cap U_\alpha.
\end{array}\right.$$
\endproof

\begin{lem}\label{extensionform}
Let  $N$ be a (not necessarily compact) complex manifold   of dimension $m$ and  let $Z$ be an analytic subset of $N$
such that $V=N\setminus Z$ can be equipped  with complex coordinates $w_1, \dots , w_m$.
Let $f$ be a holomorphic function on $V$ such that 
$$\int_{V} |f(x)|^2d\nu<\infty, $$
where 
$d\nu :=\frac{i^m}{2^m}dw_1\wedge d\ov w_1\wedge\cdots\wedge dw_m\wedge d\ov w_m$.
Then there exists a $(m, 0)$-form $\eta$  on $N$
such that 
 $\eta_{|V}=f dw_1\wedge\cdots\wedge dw_m$.
\end{lem}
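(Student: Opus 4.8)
The plan is to reduce this to a standard $L^2$-extension result for holomorphic functions across analytic subsets, exactly in the spirit of the proof of Lemma \ref{extensionsection}, but now keeping track of the top-degree holomorphic form rather than a section of a line bundle. First I would cover $Z$ by a countable family of connected coordinate charts $W_\beta \subseteq N$, each equipped with holomorphic coordinates $\zeta_1^{(\beta)}, \dots, \zeta_m^{(\beta)}$, such that $Z \subseteq \bigcup_\beta W_\beta$. On the overlap $W_\beta \cap V$ the two coordinate systems are related by a biholomorphism, so $dw_1 \wedge \cdots \wedge dw_m = J_\beta \, d\zeta_1^{(\beta)} \wedge \cdots \wedge d\zeta_m^{(\beta)}$, where $J_\beta$ is the (nowhere-vanishing) holomorphic Jacobian determinant on $W_\beta \cap V$. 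Hence on $W_\beta \cap V$ the form $f\, dw_1 \wedge \cdots \wedge dw_m$ equals $(f J_\beta)\, d\zeta_1^{(\beta)} \wedge \cdots \wedge d\zeta_m^{(\beta)}$, and the task becomes: show the holomorphic function $f J_\beta$ extends holomorphically across $Z \cap W_\beta$.

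The key point is that the extension is a \emph{local} question and that the measure $d\nu$ pulls back, up to a positive continuous (indeed smooth, nowhere-zero) density $|J_\beta|^2$, to the Euclidean measure in the $\zeta^{(\beta)}$-coordinates. Shrinking $W_\beta$ if necessary so that $|J_\beta|^2$ is bounded above and below by positive constants on a relatively compact neighbourhood, the finiteness hypothesis $\int_V |f|^2\, d\nu < \infty$ gives $\int_{W_\beta \cap V} |f J_\beta|^2 \, \frac{i^m}{2^m} d\zeta_1^{(\beta)}\wedge d\bar\zeta_1^{(\beta)} \wedge \cdots < \infty$. Since $Z \cap W_\beta$ is an analytic subset of $W_\beta$, the $L^2$-extension theorem across analytic subsets (\cite[Thm. 5.17, p. 101]{ohsawa}, the same result invoked in Lemma \ref{extensionsection}) applies and produces a holomorphic extension $\widetilde{g_\beta}$ of $f J_\beta$ to all of $W_\beta$. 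Setting $\eta_\beta := \widetilde{g_\beta}\, d\zeta_1^{(\beta)} \wedge \cdots \wedge d\zeta_m^{(\beta)}$ on $W_\beta$ gives an $(m,0)$-form on $W_\beta$ which agrees with $f\, dw_1 \wedge \cdots \wedge dw_m$ on $W_\beta \cap V$.

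Finally I would glue: on $V$ set $\eta := f\, dw_1 \wedge \cdots \wedge dw_m$, and on each $W_\beta$ set $\eta := \eta_\beta$. On the overlaps $W_\beta \cap W_{\beta'}$ (resp. $W_\beta \cap V$) the two definitions are holomorphic $(m,0)$-forms agreeing on the nonempty open set where they simultaneously coincide with $f\, dw_1 \wedge \cdots \wedge dw_m$ — and $W_\beta \cap W_{\beta'}$ meets $V$ in a dense open subset because $Z$ is a proper analytic subset — so by the identity principle for holomorphic forms they agree on the whole overlap. Thus the $\eta_\beta$ and $f\,dw_1\wedge\cdots\wedge dw_m$ patch to a well-defined global $(m,0)$-form $\eta$ on $N$ with $\eta_{|V} = f\, dw_1 \wedge \cdots \wedge dw_m$, as required. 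The only delicate point — and the one I would be most careful about — is making sure the comparison of measures is uniform near points of $Z$, i.e. that one can shrink charts so that $|J_\beta|^2$ stays bounded away from $0$ and $\infty$; this is immediate since $J_\beta$ is holomorphic and nowhere zero on $W_\beta \cap V$ but one must phrase the extension statement on a relatively compact subchart. Everything else is the identity principle and a direct appeal to \cite{ohsawa}.
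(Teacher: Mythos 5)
Your overall route is the same as the paper's: the paper proves this lemma by repeating the argument of Lemma \ref{extensionsection} with the canonical bundle in place of $L$, i.e.\ viewing $f\,dw_1\wedge\cdots\wedge dw_m$ as a holomorphic section of the bundle of $(m,0)$-forms, writing it in local frames $d\zeta_1^{(\beta)}\wedge\cdots\wedge d\zeta_m^{(\beta)}$ with ``transition function'' the Jacobian $J_\beta$, extending the coefficient across $Z$ by the $L^2$ Riemann-type extension theorem \cite[Thm. 5.17, p. 101]{ohsawa}, and gluing by the identity principle. That part of your plan is fine.

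The one genuine flaw is precisely the step you single out as the delicate point, and your justification of it is wrong: it is \emph{not} true in general that $|J_\beta|$ can be made bounded above and below near points of $Z$ by shrinking the chart. The Jacobian $J_\beta$ is holomorphic and nowhere zero only on $W_\beta\cap V$, because the coordinates $w_1,\dots,w_m$ live only on $V$; nothing prevents $J_\beta$ from tending to $0$ or $\infty$ along $Z\cap W_\beta$ (already for $N=\bC$, $Z=\{0\}$, $V=\bC^*$ with $w=1/z$ one has $J=-z^{-2}$, unbounded near $0$), so ``holomorphic and nowhere zero on $W_\beta\cap V$'' gives no bound near $Z$. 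Fortunately the bound is not needed at all: $|J_\beta|^2$ is exactly the change-of-variables density, so on $W_\beta\cap V$ one has the identity $|f|^2\,d\nu=|fJ_\beta|^2\,\tfrac{i^m}{2^m}\,d\zeta_1^{(\beta)}\wedge d\bar\zeta_1^{(\beta)}\wedge\cdots\wedge d\zeta_m^{(\beta)}\wedge d\bar\zeta_m^{(\beta)}$ (equivalently, $\tfrac{i^m}{2^m}\eta\wedge\bar\eta$ is coordinate-independent for $\eta=f\,dw_1\wedge\cdots\wedge dw_m$). Hence $\int_{W_\beta\cap V}|fJ_\beta|^2\,\tfrac{i^m}{2^m}\,d\zeta^{(\beta)}\wedge d\bar\zeta^{(\beta)}\leq\int_V|f|^2\,d\nu<\infty$ with no control on $J_\beta$ whatsoever, and \cite[Thm. 5.17]{ohsawa} applies on each chart exactly as you intend. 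With that replacement (which actually simplifies your argument), the gluing via density of $W_\beta\cap W_{\beta'}\cap V$ and the identity principle goes through, and the proof agrees with the paper's.
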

\proof
The proof can be obtained by an  argument similar  to that of  the previous lemma using the line bundle of $\Lambda^{m,0}K$
whose sections are holomorphic $m$-forms  on $N$.
\endproof

Another important ingredient in the proof of Theorem \ref{mainteor}
is the concept of regular quantization.
Let $(L, h)$ be a positive Hermitian line bundle over a compact \K\ manifold $(M, g)$ of complex dimension $n$, such that $\Ric (h)=\omega_g$ as in the introduction.
Let
$m\geq 1$ be an integer and
consider the Kempf distortion function associated to $mg$, i.e.
\begin{equation}\label{Tm}
T_{mg} (x) =\sum_{j=0}^{d_m}h_m(s_j(x), s_j(x))
\end{equation}
where  $h_m$ is an hermitian metric   on
$L^m$ such that $\Ric(h_m)=m\omega_g$
and   $s_0, \dots , s_{d_m}$, $d_m+1=\dim H^0(L^m)$ is  an orthonormal basis of $H^0(L^m)$
(the space of holomorphic sections of $L^m$)
with respect to the $L^2$-scalar product
\begin{equation}\label{scalprod}
\langle s, t \rangle_m =\int_Mh_m(s(x), t(x))\frac{\omega_g^n(x)}{n!}, \ s, t\in H^0(L^m).
\end{equation}
(In the  quantum geometric context
$m^{-1}$ plays the role of Planck's constant, see e.g.
\cite{arlquant}).
As suggested by the notation this function depends only on the
\K\ metric $g$ and on $m$ and not on the orthonormal basis chosen.

The function $T_{mg}(x)$ admits the following asymptotic expansion (called Tian-Yau-Zelditch expansion or, for short, TYZ expansion)
\begin{equation}\label{asymptoticZ}
T_{mg}(x) \sim \sum_{j=0}^\infty  a_j(x)m^{n-j},
\end{equation}
where  $a_j(x)$, $j=0,1, \ldots$, are smooth coefficients with $a_0(x)=1$.
More precisely,
for any nonnegative integers $r,k$ the following estimate holds:
\begin{equation}\label{rest}
||T_{mg}(x)-
\sum_{j=0}^{k}a_j(x)m^{n-j}||_{C^r}\leq C_{k, r}m^{n-k-1},
\end{equation}
where $C_{k, r}$ are constants depending on $k, r$ and on the
K\"{a}hler form $\omega_g$ and $ || \cdot ||_{C^r}$ denotes  the $C^r$
norm in local coordinates. 
In \cite{lu} Z. Lu,  by means of  Tian's peak section method,
 proved  that each of the coefficients $a_j(x)$ in
(\ref{asymptoticZ}) is a polynomial
of the curvature and its
covariant derivatives at $x$ of the metric $g$ which can be found
 by finitely many algebraic operations.
 Furthermore,  he explicitely computes
$a_j$ with $j\leq 3$. The reader is referred to \cite{ALZ}
and references therein for details on Kempf distortion function and its 
link with the TYZ  expansion.

Prescribing the values of  the  coefficients
of the  TYZ expansion gives rise to interesting elliptic PDEs
as shown by Z. Lu and G. Tian  \cite{logterm}. The main result obtained there,  is that if the log-term
of the Bergman kernel  of the unit disk bundle over $M$ defined in the introduction  vanishes then  $a_k=0$, for  $k>n$
($n$ being the complex dimension of $M$). Moreover Z. Lu has conjectured (private communication) that the converse is true, namely if 
$a_k=0$, for  $k>n$ then the log-term vanishes.

A \K\ manifold  $(M, g)$ admits a {\em regular quantization} if  
there exists  a positive Hermitian line bundle $(L, h)$ as above  such that 
the Kempf distortion $T_{mg} (x)$ is  a constant $T_{mg}$ (depending on $m$)
for all non-negative integer   $m\geq 1$. 
One also refers to $(L, h)$ as a regular quantization of $(M, g)$.

Regular quantizations play a prominent role in the study of Berezin quantization of \K\ manifolds  (see  \cite{arlquant} , \cite{arlcomm}, \cite{hombal} and  \cite{berloimossa} and references therein).
From our point of view we are interested in the following result.
\begin{lem}\label{homreg}
Any  geometric quantization $(L, h)$ of a  compact,  simply-connected and    homogeneous \K\ manifold $(M, g)$
is  regular.
\end{lem}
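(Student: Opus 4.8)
The plan is to reduce the regularity of an arbitrary geometric quantization of $(M,g)$ to a homogeneity argument for the Kempf distortion function $T_{mg}$. First I would recall that, since $(M,g)$ is compact, simply-connected and homogeneous \K, its connected isometry group $G$ acts transitively on $M$ and, being compact with trivial center, is semisimple. By a classical result (Matsushima, or the discussion recalled from \cite{bes} in Section \ref{flag}) the \K\ form $\omega_g$ is invariant under $G$, and moreover the action of $G$ lifts to an action on the quantum line bundle $L$ preserving the Hermitian metric $h$: indeed $L$ is the unique holomorphic line bundle with $c_1(L)=[\omega_g/\pi]$ whose curvature is $\omega_g$, and uniqueness forces $G$-equivariance of $(L,h)$ (one may also argue via the prequantum circle bundle, on which $G$ acts by contact transformations). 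Consequently $G$ acts on each tensor power $(L^m, h_m)$, again preserving $h_m$ with $\Ric(h_m)=m\omega_g$.

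Next I would exploit this lifted action on the space of holomorphic sections. The group $G$ acts linearly on the finite-dimensional vector space $H^0(L^m)$, and because the $G$-action preserves both $h_m$ and the volume form $\omega_g^n/n!$, it preserves the $L^2$-inner product \eqref{scalprod}; hence $G$ acts by unitary transformations on $H^0(L^m)$. Now the key point: the Kempf distortion function $T_{mg}$, which by \eqref{Tm} is the pointwise norm-squared of the coherent-state map $x\mapsto (s_0(x),\dots,s_{d_m}(x))$ up to the chosen basis, is intrinsically defined (independent of the orthonormal basis, as noted after \eqref{scalprod}) and transforms equivariantly under the $G$-action. Concretely, for $\phi\in G$ one has $T_{mg}(\phi\cdot x)=T_{mg}(x)$: this follows because an orthonormal basis $\{s_j\}$ is carried by $\phi$ to another orthonormal basis $\{\phi\cdot s_j\}$, and $\sum_j h_m\big((\phi\cdot s_j)(x),(\phi\cdot s_j)(x)\big)$ evaluated at $x$ equals $\sum_j h_m\big(s_j(\phi^{-1}x),s_j(\phi^{-1}x)\big)$ by the invariance of $h_m$, i.e. $T_{mg}(x)=T_{mg}(\phi^{-1}x)$. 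Since $G$ acts transitively on $M$, the function $T_{mg}$ is constant on $M$; calling this constant $T_{mg}$ again, we conclude that $(L,h)$ is a regular quantization of $(M,g)$, which is exactly the claim. One can also record the value of the constant by integrating \eqref{Tm} over $M$: $T_{mg}\cdot\mathrm{Vol}(M,\omega_g)=\int_M T_{mg}\,\omega_g^n/n! = d_m+1 = \dim H^0(L^m)$.

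The step I expect to require the most care is establishing the $G$-equivariance of the Hermitian line bundle $(L,h)$ — that the isometric action on $(M,g)$ genuinely lifts to a holomorphic isometric action on $(L,h)$, rather than merely a projective action. This is where simple-connectedness of $M$ is used: a priori the action lifts only after passing to a central extension or a finite cover of $G$, but since $M=G/K$ is simply connected and $G$ is the (connected, semisimple, centerless) isometry group, the obstruction vanishes and the lift exists honestly. Once this is in hand the rest of the argument is the essentially formal averaging/transitivity computation sketched above, and everything downstream — the unitarity of the $G$-representation on $H^0(L^m)$ and the equivariance of $T_{mg}$ — is routine. It is perhaps worth remarking that this lemma applies in particular to $(M,g)$ a homogeneous \K--Einstein manifold with $L=K^*$ as in Theorem \ref{mainteor}, which is the case we shall actually need.
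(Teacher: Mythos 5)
Your argument is correct and is essentially the proof behind the result the paper invokes: the paper proves Lemma \ref{homreg} simply by citing \cite[Theorem 5.1]{arlquant}, whose proof is exactly this lifting-plus-transitivity argument (lift each holomorphic isometry to a unitary bundle map of $(L^m,h_m)$, note that the $L^2$-product and hence orthonormal bases are preserved, and conclude $T_{mg}(\phi\cdot x)=T_{mg}(x)$, so $T_{mg}$ is constant by homogeneity). Note also that for the invariance of $T_{mg}$ a fiberwise unitary lift of each individual isometry suffices, so the only delicate point you flag (an honest global $G$-action versus a projective one) is not even needed.
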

\begin{proof}
See \cite[Theorem 5.1]{arlquant}.
\end{proof}
We are now in the position to prove Theorem \ref{mainteor}.

\begin{proof}[Proof of Theorem \ref{mainteor}]
A  compact homogeneous Hodge manifold  $(M, g)$ is a flag manifold $(F, \omega)$,
where $\omega=\omega_{\xi}$  is an invariant \K\ form given by \eqref{omegaxi} 
and $\omega/\pi$  is integral. 
It follows by Theorem  \ref{teorpotential}
that  there exists a dense and open subset $U=F_{reg}\subset M$ biholomorphic to $\bC^n$ 
(with $n$ the complex dimension of $M$) and a global  \K\ potential $\Phi :U\rightarrow \R$
for the metric $g$, i.e. $\omega_g=\frac{i}{2}\partial\bar\partial\Phi$.
Moreover, since we are assuming that  $(M, g)$ is of classical type, it  follows by  Theorem \ref{diastasis}
that $\Phi$ is  Calabi's diastasis function  
for the the metric $g$ and $\Phi$ blows up on the points of  $Y=M\setminus U$.
On the other hand, $(M, g)$ can be \K\ embedded into some complex projective space (see  Section 1.2. in
\cite{DHL}), namely there exists a positive integer $N$ and a  holomorphic embedding $f:M\rightarrow \bC P^N$
such that $f^*g_{FS}=g$, where $g_{FS}$ denotes the Fubini--Study metric of $\bC P^N$.
 Hence, by a theorem of Calabi \cite{Cal},  $Y$  turns out to be the  divisor on $M$
given by the pull-back via the embedding $f$ of the hyperplane divisor of $\bC P^N$.
As $U$ is contractible the restriction of $L^*$ to $U$ is holomorphically trivial, $L^*_{|U}\cong U\times \bC$.
Let $\sigma :U\rightarrow L$ be a trivializing holomorphic section of $L$ and let $\tau:U\rightarrow L^*$
be the dual section trivializing $L^*$, namely $\tau (x)(\sigma (x))=1$.
Then 
$h^*(\tau (x), \tau (x))=\left(h(\sigma (x), \sigma (x))\right)^{-1}$ 
and
it follows by \eqref{diskbundle} that  the restriction of the disc bundle $D$ to $U$ is biholomorphic to the Hartogs domain
$H\subset\bC^{n+1}$ given by:
\begin{equation}
H=\{\left( z ,\, \lmb \right) \in \bC^{n+1} \ |\ \left| \lmb \right| ^2 < h \left( z \right) ,\ z \in \bC^n    \},
\end{equation}
where $h(z):=h(\sigma (x), \sigma (x))$
and we identify a point $x\in U$
with its coordinates $z=(z_1, \dots , z_n)\in\bC^n$.
Moreover, the restriction of $\partial D$ to $U$ is identified with
$$\partial H=\{\left( z ,\, \lmb \right) \in \bC^{n+1} \ |\ \left| \lmb \right| ^2 = h \left( z \right) \}.$$

By Lemma \ref{extensionform} (with $N=D$, $m=n+1$, $Z=\pi^{-1}(Y)$, $\pi :D\rightarrow M$ the bundle projection and hence $V=H$)
the Bergman space $\mathcal B _{D}$ can be identified with 
the (usual) Bergman space 
\[
\mathcal L^2(H)=\{f\in\Hol (H) \ | \|f\|^2= \frac{i}{2} \int_H|f(z, \lambda)|^2\ d\lmb\wedge d\ov\lmb\wedge d\mu <\infty\},
\]
where $d\mu :=\frac{i^n}{2^n}dz_1\wedge d\ov z_1\wedge\cdots\wedge dz_n\wedge d\ov z_n$.
Let  $K(z, \lambda)$ be  the reproducing kernel  of $\mathcal L^2(H)$.
In view of \eqref{logtermzero}, we need to show that there exists  a non vanishing smooth function 
 $a\in C^{\infty}(\ov H)$  such that  
 \begin{equation}\label{dadim}
 K(z, \lambda)=a(z, \lambda)\rho (z, \lmb)^{-n-2},\ \ \rho (z, \lambda)=1-\frac{|\lambda|^2}{h(z)}.
 \end{equation}

The Hilbert space  ${\mathcal L^2}\left( H\right)$ admits the Fourier decomposition into irreducible factors with respect to  the natural  $S^1$-action $s(z, \lambda)\mapsto (z, s\lambda), s\in S^1$
and hence we can write 
\[
{\mathcal L}^2\left( H\right) =\oplus_{m=0}^{+\infty}\,{\mathcal L^2_m}\left( H\right),  
\]
where 
$${\mathcal L^2_m}\left( H\right) =\{f\in {\mathcal L}^2\left( H\right)  \ | \ f\left(z, s\lambda \right) =s^m\,f\left(z, \lambda\right) ,\  s\in S^1 \}.$$

Now, the assumption that $\frac{\omega_g}{\pi} \in c_1(K^*)$ implies that the Einstein constant of $g$ equals $2$, i.e.
$$\rho_{\omega_g}=-i\de\deb\log (\det g)=2\omega_g .$$
By \eqref{prodherm},  $\frac{\omega_g^n}{n!}=\det (g)d\mu$ and the $\de\deb$-lemma there exists a
holomorphic function $\varphi$ on $U$ such that 
$\frac{\omega_g^n}{n!}=e^{\varphi +\ov\varphi}\,h\,d\mu.$

\noindent If $\{ s_j^m \}$ is an orthonormal basis of $(H^0(L^m), \langle\cdot  ,  \cdot\rangle_m)$ (given by  \eqref{scalprod}), let us define functions $f_j\left(z, \, \lmb \right) \in \mathcal L^2_m \left( H\right)$ by $f_j\left(z, \, \lmb \right) = \lmb^m f_j\left(z \right)$ where
$$f_j(z)\sigma_m (z)=\sqrt\frac{\left( m+1 \right) }{\pi} \ e^{\varphi}{s^{m}_j}(z),$$
\noindent and $\sigma_m$
is the trivializing section of $L^m$ given by $\sigma_m=\sigma^{\otimes m}$.  Notice that 

\[
\frac{i}{2} \int_H f_j(z, \lambda) \ov f_k(z, \lambda)\ d\lmb\wedge d\ov\lmb\wedge d\mu=\frac{i}{2} 
\int_{\bC^n}\int_{\left| \lmb \right| ^2 < h\left( z \right)}  \left| \lmb \right| ^{2\, m}   f_j\left( z\right) \ov f_k\left( z\right)    \ d\lmb\wedge d\ov\lmb\wedge d\mu= 
\]  
\[
=\frac{\pi}{m+1}\int_{\bC^n} h \left( z \right) ^{m+1} f_j\left( z\right) \ov f_k\left( z\right) d\mu =\frac{\pi}{m+1}\int_{\bC^n} e^{-\varphi -\ov\varphi} h \left( z \right) ^{m} f_j\left( z\right) \ov f_k\left( z\right) \frac{\omega_g^n}{n!}=
\]
\begin{equation}\label{pim1}
=\int_{\bC^n} h_m(s^m_j(z), s^m_k(z))  \frac{\omega_g^n}{n!}= \delta_{jk}.
\end{equation}

Combining  this computation with Lemma \ref{extensionsection} we see that $\{ f_j(z, \lmb) \}_{j=0, \dots, N_m}$ is an orthonormal basis of $(\mathcal L^2_m \left( H\right), \| \cdot \|)$. Thus

\[
K_m \left(z, \, \lmb \right) 
= \sum_{j=0}^{N_m} \left| f_j \left(z, \, \lmb \right)\right| ^2 
=   \left( \frac{\left|\lmb\right|^2}{h \left( z \right) } \right) ^m\sum_{j=0}^{N_m} h \left( z \right) ^{m} \left| f_j \left(z\right)\right| ^2
\]
\[=
 \left( \frac{\left|\lmb\right|^2}{h \left( z \right) } \right) ^m \,  \frac{ \left( m+1 \right)  e^{\varphi + \ov \varphi}}{\pi} \, T_{mg}\left( z \right),
\]
where $T_{mg}$ is Kempf distortion function given by \eqref{Tm}.
It follows  by Lemma \ref{homreg}  and by the very definition of Kempf distortion function that 
\begin{equation}\label{Tmh}
T_{mg}(z)=T_{mg}=\frac{h^{0}(L^m)}{V(M)},
\end{equation}
where $V(M)=\int_M\frac{\omega^n}{n!}$.
By Riemann--Roch theorem  $h^0(L^m)$ is a monic  polynomial in  $m$  of degree $n$ so it can be written as  linear combination
of the binomial coefficients  $C_{k}^{m+k}=\frac{(m+k)!}{m!\, k!}$, namely
\[
h^0(L^m)=\sum_{k=0}^nd_k\, C_{k}^{m+k},\quad  d_n=n!.
\]
Observe that
\[
m\,C^{m+k}_k=\left( k + 1 \right) C^{\left( m-1 \right) + \left( k+1 \right)}_{k+1} 
\]
and
\[
\sum_{m=0}^{\infty}C_{k}^{m+k}\, x^m=\frac{1}{\left( 1-x\right) ^{k+1}}, \ 0<x<1.
\]
Hence
\[
K\left(z, \, \lmb \right)= \sum _ {m=0} ^ \infty K_m \left(z, \, \lmb \right)
=\frac{ e^{\varphi + \ov \varphi}}{  \pi \, V(M)} \sum_{k=0}^n   \, d_k   \sum _ {m=0} ^ \infty { \left( m+1 \right) \, C_{k}^{m+k} }  \left( \frac{\left|\lmb\right|^2}{h \left( z \right) } \right) ^m
\]
\[
=\frac{e^{\varphi + \ov \varphi}}{ \pi \, V(M)} \sum_{k=0}^n  \, d_k   \sum _ {m=0} ^ \infty  \left( \left( k + 1 \right) C^{\left( m-1 \right) + \left( k+1 \right)}_{k+1} + C_{k}^{m+k} \right) \left( \frac{\left|\lmb\right|^2}{h \left( z \right) } \right) ^m
\]
\[
=\frac{e^{\varphi + \ov \varphi}}{  \pi \, V(M)} \sum_{k=0}^n  \, d_k  \left( \left(  k + 1 \right)  \rho\left(  z, \, \lmb\right)^{- k - 2}  +   \rho\left(  z, \, \lmb\right)^{- k - 1}        \right) =a(z, \lambda)\rho\left(  z, \, \lmb\right)^{-n-2},
\]
where
\[
a(z, \lambda)=\frac{e^{\varphi + \ov \varphi}}{\pi \, V(M)} \sum_{k=0}^n  \, d_k  \left( \left(  k + 1 \right)  \rho\left(  z, \, \lmb\right)^{n - k}   +  \rho\left(  z, \, \lmb\right)^{n-k+1}        \right).  
\]
This proves \eqref{dadim} and concludes the proof of the theorem (notice that when $\rho\left(  z, \, \lmb\right) \rightarrow 0$  then $a(z, \lambda) \rightarrow \frac{e^{\varphi + \ov \varphi}}{\pi \, V(M)} d_n (n+1) \neq 0$, so $a$ does not vanish on $\partial D$.
\end{proof}

\appendix
\section{Classical Lie Algebras}\label{projectiveQ}
We now describe root systems, root vectors and Dynkin diagram  for the classical groups $G = SU(d), Sp(d), SO(2d), SO(2d+1)$
following \cite[Section III.8]{helgason}.
The diagrams are  endowed with the equipment $\Pi_{can}$ we have used throughout the paper and which we have referred to as the canonical equipment. 

\vskip 0.3cm

\begin{ex}\rm
If $G= SU(d)$,  $\mathfrak{g}^{\bC} = sl(d, \bC)$ is the set of matrices with null trace, and let $\mathfrak{h}^{\bC}$ be given by the diagonal matrices in $sl(d, \bC)$; for any $H = \diag(h_1, \dots, h_d)$ let $e_i(H) = h_i$: then the root system is $R = \{ e_i - e_j \ | \ i \neq j \}$ and $E_{\alpha}$, $\alpha = e_i - e_j$, is the matrix $E_{ij}$ having 1 in the ij place and 0 anywhere else. The Killing form $B$ satisfies $B(X,Y)= 2d \tr(XY)$ for all $X, Y \in sl(d, \bC)$.
The canonical basis is 
$$\Pi_{can} = \{ \alpha_1 = e_1 - e_2, \dots, \alpha_{d-1} = e_{d-1} - e_d \}.$$ 
The Dynkin diagram, with this equipment, is
\begin{align*}
 \underset{\substack{\alpha_1}}{\circ} - \underset{\substack{\alpha_2}}{\circ} - \dotsb - \underset{\substack{\alpha_{d-2}}}{\circ} - \underset{\substack{\alpha_{d-1}}}{\circ} 
 \\
\end{align*}
\end{ex}
\begin{ex}\rm
For $G= Sp(d)$, $\mathfrak{g}^{\bC} = sp(d, \bC)$ is the set of $2d \times 2d$ block matrices of the kind $ \left( \begin{array}{cc}
Z_1 & Z_2 \\
Z_3 & -{}^T Z_1 \\
\end{array} \right)$, where $Z_2, Z_3$ are symmetric. Let the Cartan subalgebra $\mathfrak{h}^{\bC}$ be given by diagonal matrices $H = \diag(h_1, \dots, h_d, -h_1, \dots, -h_d)$ in $sp(d, \bC)$, and if for any such $H$ we have $e_i(H) = h_i$, $i=1, \dots, d$, then the root system is $R = \{ \pm e_i \pm e_j \}$ (the case $i=j$ is allowed when the signs are equal). The root vector $E_{\alpha}$ is given by

$\left( \begin{array}{cc}
E_{ij} & 0 \\
0 & -E_{ji} \\
\end{array} \right)$ if $\alpha = e_i - e_j$, $\left( \begin{array}{cc}
0 & E_{ij} + E_{ji} \\
0 & 0 \\
\end{array} \right)$ if $\alpha = e_i + e_j$ and \linebreak $\left( \begin{array}{cc}
0 & 0 \\
E_{ij} + E_{ji} & 0 \\
\end{array} \right)$ if $\alpha = -e_i - e_j$
and the Killing form $B$ is given by $B(X,Y)= 2(d+1) \tr(XY)$ for all $X, Y \in sp(d, \bC)$.

The canonical basis is given by 
$$\Pi_{can} = \{ \alpha_1 = e_1 - e_2, \dots, \alpha_{d-1} = e_{d-1} - e_d, \alpha_d = 2 e_d \}.$$ 
The Dynkin diagram, with this equipment, is 
\begin{align*}
\underset{\substack{\alpha_1}}{\circ} - \underset{\substack{\alpha_2}}{\circ} - \dotsb - \underset{\substack{\alpha_{d-1}}}{\circ} \Leftarrow \underset{\substack{\alpha_d}}{\circ} \\
\end{align*}
\end{ex}

\begin{ex}\rm
Let $G= SO(2d)$. Here and throughout the paper we identify the complexification $SO(2d, \bC)$ with the subgroup of $GL(2d, \bC)$ leaving invariant the quadratic form $z_1 z_{d+1} + \cdots + z_d z_{2d}$. Then $\mathfrak{g}^{\bC} = so(2d, \bC)$ is the set of $2d \times 2d$ block matrices of the kind $\left( \begin{array}{cc}
Z_1 & Z_2 \\
Z_3 & -{}^T Z_1 \\
\end{array} \right)$, where $Z_2, Z_3$ are skew-symmetric. Let the Cartan subalgebra $\mathfrak{h}^{\bC}$ be given by diagonal matrices \linebreak $H = \diag(h_1, \dots, h_d, -h_1, \dots, -h_d)$ in $so(2d, \bC)$, and if for any such $H$ we have $e_i(H) = h_i$, $i=1, \dots, d$, then the root system is $R = \{ \pm e_i \pm e_j \ (i \neq j) \}$. The root vector $E_{\alpha}$ is given by

$\left( \begin{array}{cc}
E_{ij} & 0 \\
0 & -E_{ji} \\
\end{array} \right)$ if $\alpha = e_i - e_j$, $\left( \begin{array}{cc}
0 & E_{ij} - E_{ji} \\
0 & 0 \\
\end{array} \right)$ if $\alpha = e_i + e_j$ ($i<j$) and $\left( \begin{array}{cc}
0 & 0 \\
E_{ij} - E_{ji} & 0 \\
\end{array} \right)$ if $\alpha = -e_i - e_j$ ($i<j$)
and the Killing form $B$ is given by $B(X, Y) =2(d-1)\tr(XY)$ for all $X ,Y \in so(2d, \bC)$.

The canonical basis is given by 
$$\Pi_{can} = \{ \alpha_1 = e_1 - e_2, \dots, \alpha_{d-1} = e_{d-1} - e_d, \alpha_d = e_{d-1} + e_d \}.$$ 
The Dynkin diagram, with this equipment, is
\begin{align*}
 \underset{\substack{\alpha_1}}{\circ} - \underset{\substack{\alpha_2}}{\circ} - \dotsb - \underset{\substack{\alpha_{d-2}}}{\overset{\overset{\textstyle\circ_{\alpha_d}}{\textstyle\vert}}{\circ}} \,-\, \underset{\substack{\alpha_{d-1}}}{\circ} && \\
\end{align*}
\end{ex}

\begin{ex}\rm
Let $G= SO(2d+1)$. Here and throughout the paper we identify the complexification $SO(2d+1, \bC)$ with the subgroup of $GL(2d+1, \bC)$ leaving invariant the quadratic form $2(z_1 z_{d+1} + \cdots + z_d z_{2d}) + z_{2d+1}$. Then $\mathfrak{g}^{\bC} = so(2d+1, \bC)$ is the set of $(2d+1) \times (2d+1)$ block matrices of the kind $\left( \begin{array}{ccc}
Z_1 & Z_2 & u \\
Z_3 & -{}^T Z_1 & v \\
-{}^T v & -{}^T u & 0
\end{array} \right)$, where $Z_2, Z_3$ are skew-symmetric and $u, v \in \bC^d$. Let the Cartan subalgebra $\mathfrak{h}^{\bC}$ be given by diagonal matrices \linebreak $H = \diag(h_1, \dots, h_d, -h_1, \dots, -h_d, 0)$ in $so(2d+1, \bC)$, and if for any such $H$ we have $e_i(H) = h_i$, $i=1, \dots, d$, then the root system is $R = \{ \pm e_i \pm e_j \ (i \neq j), \pm e_i \}$. The root vector $E_{\alpha}$ is given by

$\left( \begin{array}{ccc}
E_{ij} & 0 & 0 \\
0 & -E_{ji} & 0\\
0 & 0 & 0 
\end{array} \right)$ if $\alpha = e_i - e_j$, $\left( \begin{array}{ccc}
0 & E_{ij} - E_{ji} & 0 \\
0 & 0 & 0 \\
0 & 0 & 0
\end{array} \right)$ if $\alpha = e_i + e_j$ ($i<j$), $\left( \begin{array}{ccc}
0 & 0 & 0 \\
E_{ij} - E_{ji} & 0 & 0 \\
0 & 0 & 0
\end{array} \right)$ if $\alpha = -e_i - e_j$ ($i<j$), $\left( \begin{array}{ccc}
0 & 0 & E_i \\
0 & 0 & 0 \\
0 & -{}^T E_i & 0
\end{array} \right)$ if $\alpha = e_i $ and $\left( \begin{array}{ccc}
0 & 0 & 0 \\
0 & 0 & E_i \\
-{}^T E_i & 0  & 0
\end{array} \right)$ if $\alpha = - e_i $ (being $E_i$ the $i$-th vector of the canonical basis of $\bC^d$)
and the Killing form $B$ is given by $B(X, Y) =(2d-1)\tr(XY)$ for all $X ,Y \in so(2d+1, \bC)$.

The canonical basis is given by 
$$\Pi_{can} = \{ \alpha_1 = e_1 - e_2, \dots, \alpha_{d-1} = e_{d-1} - e_d,  \alpha_d = e_d \}.$$ The Dynkin diagram, with this equipment, is 

\begin{align*}
&&& \underset{\substack{\alpha_1}}{\circ} - \underset{\substack{\alpha_2}}{\circ} - \dotsb - \underset{\substack{\alpha_{d-1}}}{\circ} \Rightarrow \underset{\substack{\alpha_d}}{\circ} && \\
\end{align*}
\end{ex}

\begin{remark}\label{remaximaltorus}
\rm In each of the above examples, the given Cartan subalgebra is the complexification of the Lie algebra of a maximal torus $T$ in the compact group $G$ (more precisely, for $G = SU(d), Sp(d), SO(2d), SO(2d+1)$ we have respectively $T = \diag(e^{i \theta_1}, \dots, e^{i \theta_d}), \diag(e^{i \theta_1}, \dots, e^{i \theta_d}, e^{-i \theta_1}, \dots, e^{-i \theta_d})$, 

$\diag(e^{i \theta_1}, \dots, e^{i \theta_d}, e^{-i \theta_1}, \dots, e^{-i \theta_d})$, $\diag(e^{i \theta_1}, \dots, e^{i \theta_d}, e^{-i \theta_1}, \dots, e^{-i \theta_d}, 1)$). In general, given a maximal torus $T$ in a compact connected Lie group $G$, its Lie algebra $\mathfrak{t}$ is a maximal abelian subalgebra of $\mathfrak{g}$ and its complexification $\mathfrak{t}^{\bC}$ is a Cartan subalgebra of the complex Lie algebra $\mathfrak{g}^{\bC}$.
\end{remark}

\end{document}